\newtheorem{theo}{Theorem}[section]
\newtheorem{coro}{Corollary}[section]
\newtheorem{definition}{Definition}[section]
\newtheorem{remark}{Remark}[section]
\def\q{\mathbb Q}
\def\co{\mathbb C}
\def\r{\mathbb R}
\def\h{\mathbb H}
\def\n{\mathbb N}
\newcommand{\R}{\mathbb R}
\newcommand{\wt}{\widetilde}
\newcommand{\wh}{\widehat}
\let\h=\hip
\def\Om{\Omega}
\def \Om{\Omega}
\def\rmd{\mathop{\rm d\kern -1pt}\nolimits}
\def\rme{\mathop{\rm e\kern -1pt}\nolimits}
\def \noi {\noindent}
\def\bel{ \medskip
 \centerline{$ \ast \hbox to 1.0cm{}\ast \hbox to 1.0cm{}\ast $}
}
\def\longerrightarrow{-\kern-5pt\longrightarrow}
\def\star{\lower 1pt\hbox{*}}
\def \nulset {
\raise 1pt\hbox{ \hskip -3pt$\not$\kern -0.2pt \raise
.7pt\hbox{${\scriptstyle\bigcirc}$}}}
\newcommand{\hi}[1]{\mathbb{H}^#1}
\newcommand{\ov}[1]{\overline{#1}}
\let\leq=\leqslant
\let\geq=\geqslant
\title{{\bf Minimal Graphs in  $Nil_3:$ existence and
non-existence results}}
\author{B. Nelli, R. Sa Earp, E. Toubiana}
\date{}
\begin{document}

\maketitle

\begin{abstract}
\noi We study the minimal surface equation in the Heisenberg space, $Nil_3.$ 
A geometric proof of  non existence  of minimal graphs over non convex, bounded and 
unbounded domains is achieved  (our proof  holds in the 
Euclidean space as well). We solve the Dirichlet problem for the minimal surface
equation over bounded and unbounded convex domains, taking   bounded,  piecewise 
 continuous boundary value. We  are able to construct  a Scherk type minimal surface and we use it  
as a barrier to construct  non trivial minimal graphs over a wedge of angle 
$\theta\in [\frac{\pi}{2}, \pi[$ taking
non negative continuous boundary data, having at least quadratic growth. In the case of an half-plane, we are also able  to 
give solutions (with either linear or quadratic growth), provided some geometric hypothesis on the boundary data. Finally,  some  open problem arising from our work, are posed.

\end{abstract}

{\bf Keywords:}
Maximum principle, minimal extension, Dirichlet problem, non 
existence and existence, quadratic height growth.

{\bf MSC 2010:} 53A10, 53C42, 35J25.


\section{Introduction}
\label{intro}

In this paper we study the minimal  surface equation in the Heisenberg space, $Nil_3$. 
We first consider non convex (smooth) domains  $\Om\subset \R^2$ and we 
provide a construction of a  continuous (smooth) boundary  data that does not admit a minimal 
extension over $\Om$
 (Theorem \ref{non-existence}, \ref{non-existence-unbounded}). 
 We point out that our proof is geometric in nature and holds for the minimal 
surface equation in the Euclidean three space as well. Notice that 
for bounded non convex domains the non solvability of the Euclidean 
minimal equation in two variables  was established by Finn \cite{F} for 
continuous  boundary  data,  and  by  Jenkins and Serrin
\cite{JS1} for  $C^2$   boundary data, with arbitrarily small absolute value.

\vskip1.5mm

Secondly, we prove that, for any convex domain $\Om$ (bounded or unbounded) different from 
the half-plane,  given a piecewise continuous (smooth) boundary  
data
$\varphi$ over $\partial \Om$, there exists a minimal extension  $u$ 
of $\varphi$ over $\Om.$ Moreover, we prove that the boundary of the graph of 
$u$ is the union of the graph of $\varphi$ with the 
Euclidean vertical segments at the discontinuity points of
$\varphi$  (Theorem \ref{existence-unbounded}).
 
 In  \cite[Theorem 1]{ADR}, Alias, Dajczer and Rosenberg   prove an 
 existence result  for minimal graphs  on  bounded domains in $Nil_3,$  whose 
boundary is  $C^3$ and strictly convex, with continuous 
boundary data.

 We also solve the Dirichlet problem in a half-plane with 
 piecewise continuous (smooth) bounded  boundary   data  
 (Theorem \ref{existence-unbounded}).  All these graphs over the 
half-plane  have 
linear growth (see Definition \ref{growth-def}).

\vskip1.5mm

Thirdly, we provide a geometric construction of a Scherk type surface over 
a triangle and we use it to prove that
any non negative  prescribed continuous boundary data $\varphi$ on 
the boundary  of a wedge of angle $ \theta \in [\pi/2, \pi[$,  has a minimal 
extension $u$ in the wedge with at least quadratic 
growth.   The existence of the 
Scherk type surface and of the graph over a wedge, are  consequences of  more 
general existence results 
(Theorem \ref{Scherk-gen}, \ref{P.wedge-gen}).
 When the domain is a half-plane,  we are  able to construct bounded 
minimal graphs and minimal graphs with quadratic growth, provided some geometric 
hypothesis on the boundary data 
(Corollay \ref{coro-halfplane}).

Our result in $Nil_3$ is in contrast with the classical results for 
the minimal equation in Euclidean space. In $\r^3,$  if the boundary data on a
wedge of angle less that $\pi$ is zero (respectively bounded) then 
the minimal solution is zero (resp. bounded) \cite{R-SaE}.

\vskip1.5mm

Finally,  here  are some  questions that arise from a 
glimpse at  our developments together with the known results in the Euclidean 
space.
\begin{itemize}
\item The following maximum principle holds  for the minimal surface equation in 
$\r^3.$ Let $\Om$ be a strip: if   the boundary    data
$\varphi$ is bounded above by a constant $A$ on $\partial \Om$, then any 
minimal extension in $\Om$ is also bounded above by the same constant $A$ 
\cite[ Theorem 2.2, pg. 168]{R-SaE}.

It is very interesting to check if this {\em maximum principle} holds in 
the Heisenberg space. The solutions given by  Theorem \ref{existence-unbounded}(A)  satisfy such property.

\vskip1.5mm

\item  Let $\Om$ be a strip. In  $\r^3,$  the  solution to the Dirichlet problem  
for  the minimal  surface  equation in $\Om$ is not unique in general. In fact, P. Collin 
\cite{Co}, using the Jenkins-Serrin 
construction \cite{JS}, showed that there exists a smooth unbounded boundary 
value data $\varphi$ such that $\varphi$ admits an infinity of minimal 
extensions to $\Om.$ Any such extension has at most 
linear growth.

\vskip1mm

The   non uniqueness (or uniqueness) of   the minimal extension on a 
strip  in the Heisenberg space is an open problem.

\vskip1mm

On the other hand, by combining the results in  \cite {CK},   
we  have  the following uniqueness result in $\r^3:$  if $\Omega$ is  a strip and if 
$\varphi$ 
is a continuous bounded boundary  data over $\partial \Om$, then there 
exists a unique minimal extension $u$  in $\r^3$
to $\Om$ \cite[Thm 3.3]{CK}.

It is very interesting to investigate if  an analogous  uniqueness result holds 
in the Heisenberg space.
\end{itemize}

\section{$Nil_3(\tau)$  and the minimal surface equation}
\label{coordinates}

\subsection{The Setting}

The three dimensional  Heisenberg group  $Nil_3(\tau)$ can be viewed as  $\r^3$
with the following metric:
\begin{equation*}
  ds_\tau^2 =dx_1^2+dx_2^2+\left(\tau(x_2dx_1-x_1dx_2)+dx_3\right)^2,
\end{equation*}
where $\tau$ is a constant different from zero. If $\tau=0,$ we recover
the Euclidean space. In fact, most of our results hold for $\tau=0.$
We will point out when  our  results are new in the Euclidean 
case, as well.

\smallskip

The isometries of $Nil_3(\tau)$ in this model  are generated by the following
maps (see \cite{FMP} for further details).
\begin{align*}
 \varphi_1(x_1,x_2,x_3) &= (x_1+c,x_2,x_3+\tau c x_2) \\
 \varphi_2(x_1,x_2,x_3) &= (x_1,x_2+c,x_3-\tau c x_1) \\
 \varphi_3(x_1,x_2,x_3) &= (x_1,x_2,x_3+c) \\
 \varphi_4(x_1,x_2,x_3) &= ((\cos\theta)x_1-(\sin\theta)x_2,
(\sin\theta)x_1+(\cos\theta)x_2, x_3) \\
 \varphi_5(x_1,x_2,x_3) &=  (x_1,-x_2, -x_3).
\end{align*}

 Notice that $\varphi_3$ and $\varphi_4$ are translations along the  $x_3$ axis
 and  rotations in the plane $x_1,x_2,$ respectively, hence they are Euclidean
isometries, as well. Notice that $\varphi_5$ is also an Euclidean
isometry.

\smallskip

We can express the isometries of $Nil_3(\tau)$ in a complex form. Let  
$z:=x_1 + ix_2$ on $\R^2,$ then any isometry of $Nil_3(\tau)$ is of  one of the 
following  forms
  \begin{equation}
  \label{iso-complex}
  \Psi_1:= \big(\psi_1(z),\  x_3 +
\tau \text{Im}(\bar z_0 e^{i\theta}z)\big), \ \ \Psi_2:= \big(\psi_2 (z),\  -x_3 +
\tau \text{Im}(\bar z_0 e^{i\theta}\bar z)\big)
\end{equation}
where $\psi_1 (z) = e^{i\theta}z + z_0$
and  $\psi_2(z) = e^{i\theta}\bar z + z_0,$ for some $\theta\in \R$ 
and some $z_0\in \co.$
Notice that both $\psi_1$ and $\psi_2$ are isometries of $\R^2.$

\smallskip

In $Nil_3(\tau),$ we consider the left-invariant orthonormal frame
$(E_1,E_2,E_3)$ defined by

\begin{equation}
E_1=\frac{\partial}{\partial x_1}-\tau x_2\frac{\partial}{\partial x_3}, \
E_2=\frac{\partial}{\partial x_2}+\tau x_1\frac{\partial}{\partial x_3}, \
E_3=\frac{\partial}{\partial x_3}
\end{equation}

and we will write vectors with respect to this frame.

\

 In this work we always assume that the open subsets  
$\Omega \subset \R^2$ considered have properly  embedded boundary. More 
precisely 
for any $p\in \partial\Omega$, there exists an open ball $B\subset\R^2$ 
centered at 
$p$ such that $B\cap \partial\Omega$ is the graph of a $C^k$ function defined 
on some open interval, $k\geq 0$.

\

 Let 
$u: \Omega \rightarrow \R$ be a $C^2$ function.
The graph of the function $x_3=u(x_1,x_2)$ is minimal
if and only if $u$ satisfies the {\em  (vertical) minimal surface equation}

\begin{equation}\label{minimal}
{\mathcal D}_{\tau}(u):=\left(1+(u_2-\tau x_1)^2\right)u_{11}
-2(u_1+\tau x_2)(u_2-\tau x_1)u_{12}+\left(1+(u_1+\tau x_2)^2\right)u_{22}=0
\end{equation}

The previous equation can be written in divergence form, that is:

$$
{\rm div_{\r^2}}\left(\frac{\tau x_2+u_1}{W_u}, \frac{-\tau
x_1+u_2}{W_u}\right)=0
$$
where
\begin{equation}
\label{gradient}
W_u=\sqrt{1+\left(\tau x_2+u_1\right)^2+\left(-\tau x_1+u_2\right)^2}.
\end{equation}

Notice that, as $u$ is $C^2$ on $\Omega$ then, by Morrey's regularity 
result \cite{M1}, $u$ is analytic on $\Omega$.
Moreover the standard interior and boundary  maximum principle hold 
for solutions of equation \eqref{minimal}.
 Let us recall the geometric formulations of the maximum principle, 
 that will be largely applied in this article. One can find a
proof of the maximum principle in \cite{FS}.

\

{\bf Maximum  Principle.}
\begin{enumerate}
\item {\em Let $M_1$ and $M_2$ be two connected minimal surfaces 
immersed in $Nil_3$ and assume $M_2$  is complete.
Let $p$ be an interior point of both of them.  Assume that $M_1$ 
lies on one side of $M_2$ in a neighborhood of $p,$ then, $M_1$ coincides with 
$M_2$ in a neighborhood of $p$  and, by analytic 
continuation,  $M_1\subset M_2.$}

\item  {\em Let $M_1$ and $M_2$  be two compact, connected minimal 
surfaces 
in $Nil_3,$  both with boundary. Assume that 
$M_1 \cap \partial M_2=\emptyset$ 
and that $M_2 \cap \partial M_1=\emptyset$ .

Let $p$ be an interior point of both of them.  Then it cannot occur that 
 $M_1$ lies on 
one side of $M_2$ in a neighborhood of $p$.}

\end{enumerate}

\begin{definition}\label{extension} Let $\Omega$ be an open subset of $\r^2$
and let $\varphi$ be
a function continuous on $\partial\Omega$ except,  maybe, at a finite
number of points
$\{p_1,\dots, p_n\},$ where $\varphi$ has
left and right limit.  We say that $u$ is a minimal extension of $\varphi$ 
over $\bar\Omega$   if

\begin{enumerate}
\item  $u:\bar\Omega\setminus \{p_1,\dots p_n\}\longrightarrow\r,$ is 
continuous, smooth on
$\Omega,$   and satisfies
the minimal surface equation \eqref{minimal}.

\item $u_{|\partial\Omega\setminus \{p_1,\dots p_n\}}=\varphi$.
\end{enumerate}
\end{definition}

\begin{remark}\label{translations}
\begin{enumerate}
\item[]{{\rm (A)}} Let us describe the effect of the isometries of
$Nil_3(\tau)$ on a
curve  $\Gamma$ in the $x_1$-$x_2$ plane. Let $\varphi$ be any
isometry of $Nil_3(\tau).$ The curve $\varphi(\Gamma)$ is not contained in
the $x_1$-$x_2$ plane in general. The projection $\pi(\varphi(\Gamma))$
 of such curve on the $x_1$-$x_2$ plane is obtained from the curve
 $\Gamma$ by an isometry of the Euclidean $x_1$-$x_2$ plane, because
the trace of any isometry of $Nil_3(\tau)$ on the $x_1$-$x_2$ plane
is an isometry of $\r^2.$

This implies, for example, that   the notion of convexity of a curve
$\Gamma$ in the $x_1$-$x_2$ plane  is somewhat intrinsic, for the following
reason.
Assume that $\Gamma$ is convex, then $\pi(\varphi(\Gamma))$
is convex, for any isometry $\varphi$ of $Nil_3(\tau).$

\item[]{{\rm (B)}}
 Let $\Omega$ be a domain in $\R^2$ and let
$v : \Omega\rightarrow \R$ be any function defined on   $\Omega$ with 
boundary value $v_{|\partial\Omega}=\varphi.$ Apply   the  isometry $\Psi_1$ 
defined in \eqref{iso-complex}. 

\begin{align*}
 \Psi_1 \left( \{(z, v(z)),\ z\in \Omega \}\right) &=
 \{ \big(\psi_1 (z),\  v(z) + \tau \text{Im}(\ov z_0 e^{i\theta}z)\big),\ z\in
\Omega \} \\
 &= \{\big(\wt z,\, \wt v (\wt z)\big),\ \wt z \in \psi_1(\Omega )\},
\end{align*}
where $\wt z:= \psi_1 (z)$ and
$\wt v (\wt z):= (v \circ \psi_1^{-1})(\wt z) +
\tau \text{Im}(\ov z_0 e^{i\theta}\psi_1^{-1}(\wt z) )$ for any
$\wt z \in \psi_1 (\Omega )$. Consequently the image of the graph of $v$ by the
isometry $\Psi_1$ is again a graph with boundary value $\wt v$.

\end{enumerate}

\end{remark}

 \subsection{Examples of Complete Minimal Graphs}
\label{examples}
\begin{enumerate}
 \item The graph of a linear entire function $u(x_1,x_2)=ax_1+bx_2+c$,  
$a,b,c\in\r,$ is a minimal surface, that is usually called {\em plane}. Notice 
that a vertical Euclidean plane is also a 
minimal surface and it is flat.

 \item  Let $\phi(t,s)=(r(t)\cos s, r(t)\sin s, h(t))$  a parametrization of a  
 rotationally invariant surface ($\tau=\frac{1}{2}$).
 Looking for minimal solutions one gets either planes ($h(t)=const$) or a 
$1$-parameter 
family of surfaces, {\em vertical catenoids} depending on  a  
parameter $  r_0>0$, given by:
\begin{equation*}
 h(r)=\pm\int^r_{ r_0}
 \frac{r_0\sqrt{s^2+4}}{2\sqrt{s^2- r_0^2}}ds, 
\qquad r\geq  r_0.
\end{equation*}
This means that half of the catenoid is a graph over the exterior 
domain $r\geq  r_0$ 
with zero boundary values \cite{MN}. One finds a detailed study of the  vertical 
catenoids  in  \cite{BC}.

 \item  C.B.  Figueroa, F. Mercuri and R. H. Pedrosa \cite{FMP}, \cite{Fi} 
classified all
the minimal graphs invariant by a one parameter group of left invariant 
isometries.
 Such surfaces are the graphs of  functions  of the following form

 \begin{equation}
 \label{FMP-eq}
 u_a(x_1,x_2)=\tau x_1x_2+a\left[2\tau x_2\sqrt{1+4\tau^2 x_2^2}+
 \sinh^{-1} (2\tau x_2)\right]
 \end{equation}

 \item B. Daniel \cite[Examples 8.4 and 8.5]{D} constructed entire minimal 
 graphs of the form $f(x_1,x_2)=x_1g(x_2)$
 for some real function g with linear growth.
\end{enumerate}

\subsection{Horizontal Catenoids in $Nil_3(\tau)$}
\label{hor-catenoid}

 Let us describe {\em Horizontal Catenoids} in   $Nil_3(\tau).$
 In \cite{DH}, B. Daniel
and L. Hauswirth have constructed a family of horizontal catenoids
${\mathcal C}_\alpha$, $\alpha \in \, ]0, +\infty[\,$,
in $Nil_3(\frac{1}{2}).$

The family ${\mathcal C}_\alpha$ has the following
description \cite[Theorem 5.6]{DH}.
\begin{itemize}
\item The intersection of ${\mathcal C}_{\alpha}$ with any vertical
plane $x_2=c,$ $c\in\r$, is a nonempty, embedded, closed curve,  
convex with respect to the Euclidean metric.

\item The surface ${\mathcal C}_{\alpha}$ is properly embedded.

\item ${\mathcal C}_{\alpha}$ is conformally equivalent to
$\co \setminus \{0\}$.

\item The projection of ${\mathcal C}_{\alpha}$ in the $x_1$-$x_2$ plane
is the following subset of $\r^2$
\begin{equation*}
 \pi({\mathcal C}_{\alpha})=\left\{(x_1,x_2)\in\r^2,  \ |x_1|\leq
 \alpha\cosh \left(\frac{x_2}{\alpha}\right)\right\}
\end{equation*}

\item  The surface ${\mathcal C}_{\alpha}$ is invariant by rotation 
of angle $\pi$ around the  $x_1,$ $x_2$ and  $x_3$ axis.

\end{itemize}

\begin{remark}
\label{gen-catenoids}
We extend the construction of the family ${\mathcal C}_{\alpha}$ in 
$Nil_3(\tau),$   for any value of $\tau >0$.
Consider a new copy of $\R^3$ with coordinates $y=(y_1,y_2,y_3)$ and, for any
real number $\lambda >0$, let $f_\lambda : (\R^3,y)\rightarrow (\R^3, x)$ be
the map: $x=f_\lambda (y) =\lambda y$.

Then, the pullback metric of $(\R^3,x,ds_{1/2}^2)$
on $(\R^3,y)$ induced by $f_\lambda$ is:
\begin{align*}
 g_\lambda &= \lambda^2\left[ dy_1^2 + dy_2^2 +
 \left(\frac{\lambda}{2}(y_2dy_1 - y_1dy_2) + dy_3\right)^2\right] \\
 &= \lambda^2 ds_{\lambda /2}^2.
\end{align*}
Let $X: \Sigma \rightarrow (\R^3,x,ds_{1/2}^2)$ be a conformal and minimal
immersion, where $\Sigma$ is a Riemann surface. We deduce  from the previous
observations that
$Y:= f_{2\tau}^{-1} \circ X :\Sigma \rightarrow (\R^3,y,ds_{\tau}^2)$ is also a
conformal and minimal immersion.

\smallskip

Consequently, for any $\tau >0$, and for any $\alpha >0$, the
surface
${\mathcal C}^{\tau}_{\alpha}:=\frac{1}{2\tau} {\mathcal C}_{\alpha}$ is 
a horizontal catenoid in
$(\R^3,ds_{\tau}^2)$, that is in $Nil_3(\tau)$.

\smallskip

We deduce from the construction that for any $\tau >0$, the family of
horizontal
catenoids ${\mathcal C}^{\tau}_{\alpha}$ of $Nil_3(\tau)$ has a
geometric description analogous to that of
the case $\tau = 1/2$. In particular the family of
the projections $\{ \pi ({\mathcal C}^{\tau}_{\alpha}),\ \alpha >0\}$ is the same for
any $\tau>0$ since for any $\alpha, \beta >0$, we can pass from
$\pi ({\mathcal C}_{\alpha})$ to $\pi ({\mathcal C}_{\beta})$ by a suitable
homothety.
\end{remark}

\section{Non existence Results  in $Nil_3(\tau)$}
\label{Dirichlet}

As, in the following, we will deal with convex and non convex curves, 
let us recall some properties of them.
Let  $\Gamma$  be a Jordan curve in the $x_1$-$x_2$ plane
and let $\Omega$ be the open  bounded subset of the
$x_1$-$x_2$ plane such that $\partial\Omega=\Gamma$.  As the projection
of $Nil_3(\tau)$ on the first two
coordinates is a Riemannian submersion on $\r^2,$ it makes sense to assume
that $\Gamma$ is convex as an Euclidean curve.
Recall that $\Gamma$ is convex if and only if, for any point $p\in\Gamma,$
there exists a straight line $l_p$ passing through $p$ such that
$\Omega\subset \r^2\setminus l_p.$ Recall, moreover, that this definition is
equivalent to say that $\bar\Omega$ is convex, that is: for any two points 
$p,\q\in\bar\Omega,$ the segment between $p$
and $q$ is contained in $\bar \Omega.$
We observe that,  if $\Gamma$ is
not convex, then either

\

\begin{itemize}
\item[]{($\star$)}\label{convex1} There exists a point $p\in \Gamma,$ a
straight
line $L$ passing through $p$ and a neighborhood $V$ of $p$ in $\r^2,$ such that
$(L\cap V)\setminus p\subset\Omega$ (see Figure \ref{non-convex-four}),

or

\item[]{($\star\star$)}\label{convex2} There exists a closed arc
$\wt \gamma \subset \Gamma$ with endpoints $\wt p, \wt q$, and there exists a
segment $\hat l $ such that:
\begin{itemize}
 \item the straight line $L$ containing $\hat l$ is parallel (and distinct) to
the straight line passing by $\wt p$ and $\wt q$, 
  (therefore $\wt p, \wt q \not\in L$),

\item $\wt \gamma \cap L$  is infinite,

\item $\wt \gamma$ remains in one closed side of $L$,

\item Let $p\in \hat l$ (resp. $q\in \hat l$ ) be the first intersection point
of $\wt \gamma$ with $\hat l$, coming from $\wt p$ (resp. $\wt q$). Denote by
$l$ the segment $[p,q]$.

Then $p$ and $q$ are interior points of $\hat l$ and
$\hat l \setminus l \subset\Omega$ (see Figure \ref{non-convex-two}).
\end{itemize}

\end{itemize}

\begin{figure}[!h]
\centerline{
\subfigure[Non convex  $(*)$]{
\includegraphics[scale=0.3]{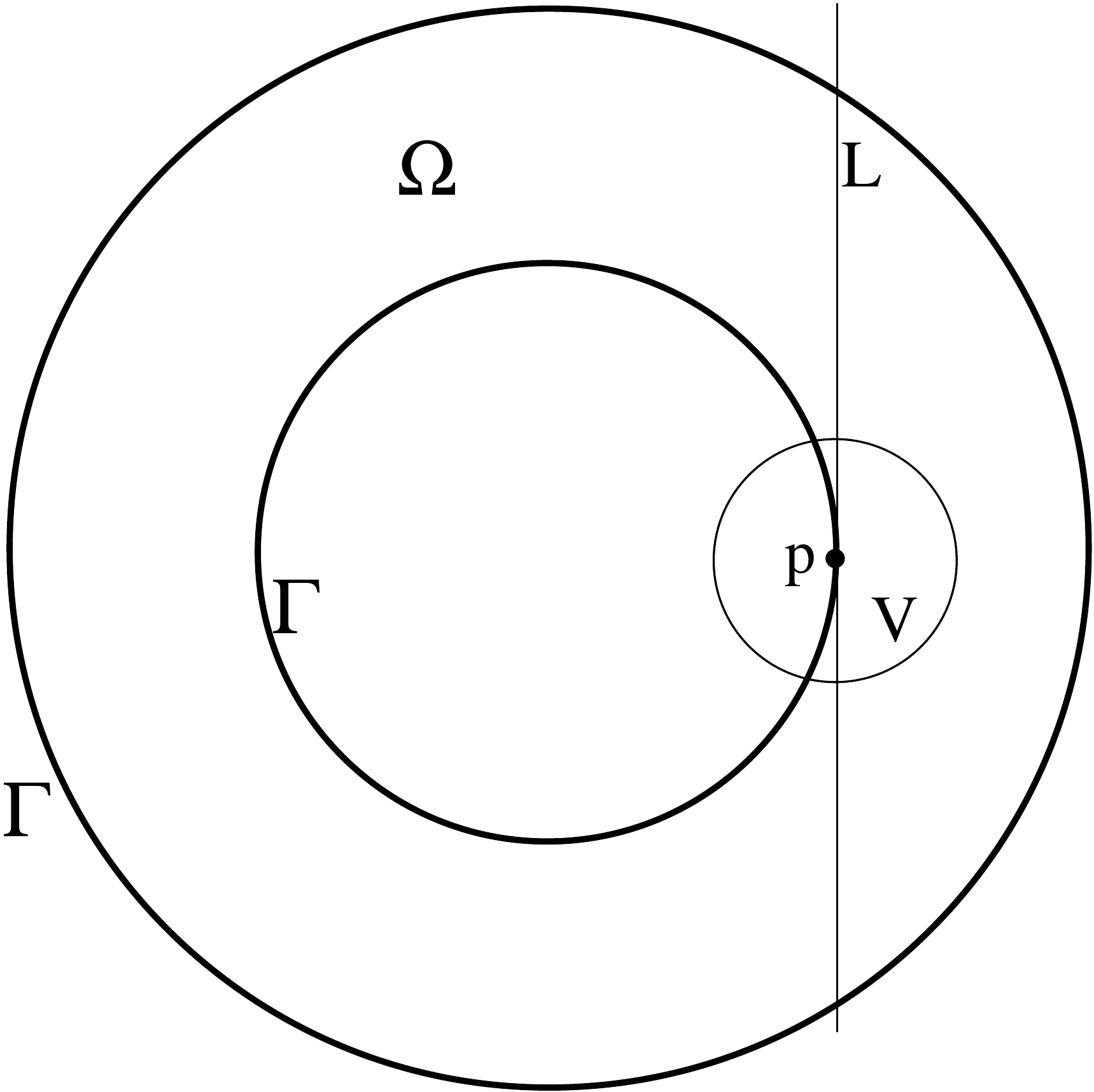}
\label{non-convex-four}
} \hskip18mm
\subfigure[Non convex $(**)$]{
\includegraphics[scale=0.3]{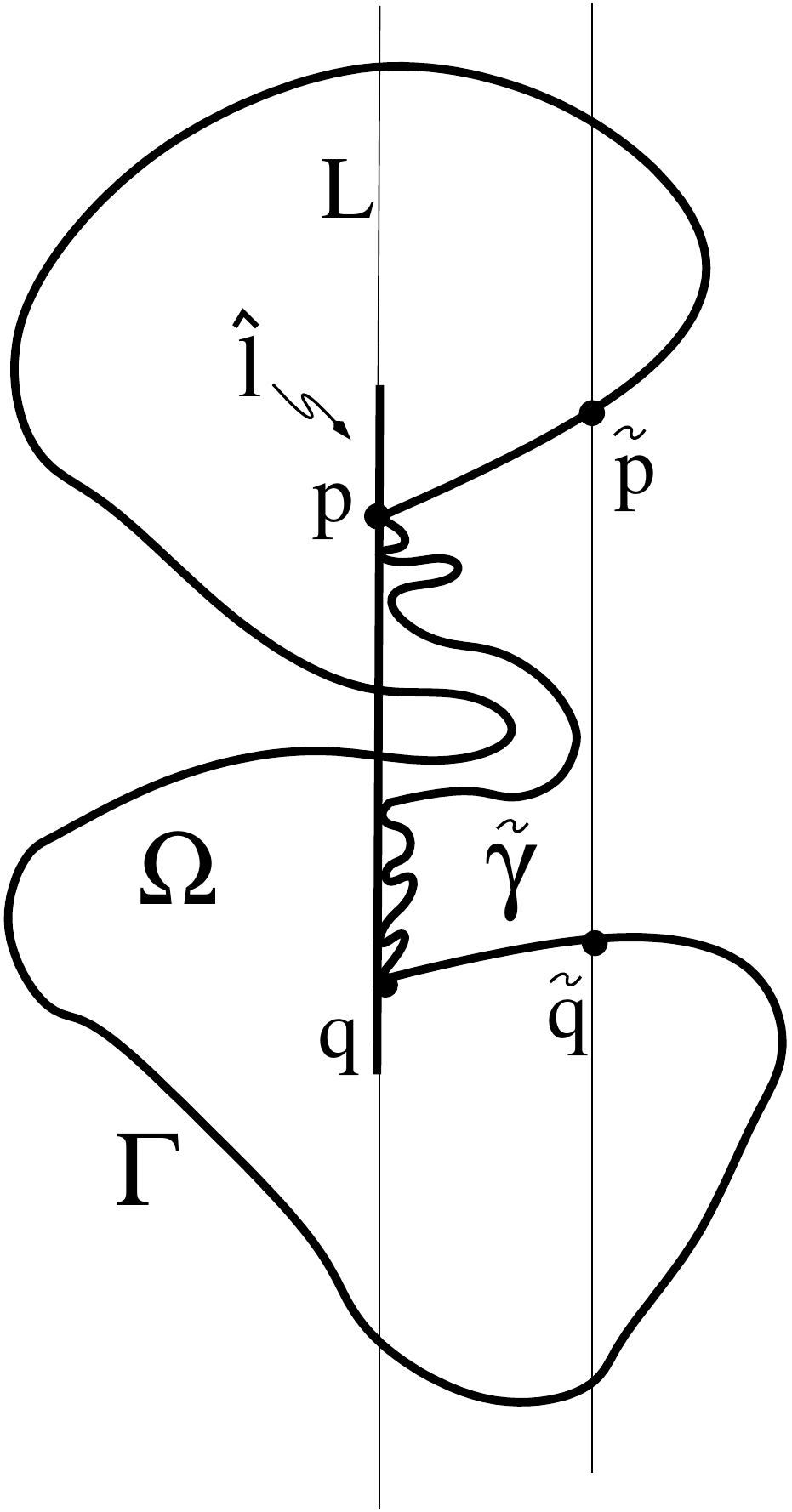}
\label{non-convex-two}
}}
\caption{Non Convexity}\label{}
\end{figure}

 Our first result is a non existence theorem on bounded domains.

\begin{theo}\label{non-existence}
Let  $\Omega$ be a bounded domain  such that $\Gamma=\partial\Omega$ is a
non convex $C^k$ curve, $k\geq 0.$  Then there exists a
$C^k$ function
$\varphi$ on $\Gamma$ that does not admit a  minimal extension over
$\bar\Omega$
in $Nil_3(\tau).$
\end{theo}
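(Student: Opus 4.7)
The plan is a barrier argument against the Maximum Principle of Section~\ref{coordinates}: given the non-convexity of $\Gamma$, construct a $C^k$ boundary datum $\varphi$ and a minimal surface barrier positioned near the non-convex feature so that any purported minimal extension of $\varphi$ contradicts the maximum principle.

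First, I would reduce to a normalized local picture. Using the dichotomy ($\star$)/($\star\star$) above, either a line $L$ through some $p\in\Gamma$ satisfies $(L\cap V)\setminus\{p\}\subset\Omega$, or a segment $\hat l\subset L$ has interior points $p,q\in\Gamma$ with end pieces in $\Omega$. By Remark~\ref{translations}(A) I apply an isometry of $Nil_3(\tau)$ whose base projection is a rigid motion of $\r^2$, placing $L$ along the $x_1$-axis and normalizing the relevant point(s) at the origin (respectively at $\pm d$ on $L$). The non-convexity then allows me to fit a round disk $\bar D_{r_0}\subset\r^2\setminus\Omega$ tangent to $\Gamma$ at $p$ from the concave side (and simultaneously at $q$ in case ($\star\star$), by adjusting $r_0$), with $\partial D_{r_0}\cap\bar\Omega=\{p\}$ (respectively $\{p,q\}$).

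Second, I would use the vertical catenoid of Example~2 in Section~\ref{examples} (or the standard Euclidean catenoid when $\tau=0$) centered at the center of $D_{r_0}$ as the barrier. Its graph function $h_{r_0}:\r^2\setminus D_{r_0}\to[0,\infty)$ vanishes on $\partial D_{r_0}$, diverges at infinity, and is defined over all of $\bar\Omega$. Prescribe $\varphi\in C^k(\Gamma)$ nonnegative, supported in a small arc $A\subset\Gamma$ around $p$ (and $q$), with $\varphi(p)>0$ and $\varphi\le h_{r_0}$ pointwise on $\Gamma\setminus\{p\}$. Assuming a minimal extension $u$ exists, slide the vertical translates $h_{r_0}+\lambda$ (minimal by the isometry $\varphi_3$) downward from large $\lambda$, where they lie above the bounded graph $G_u$, until first contact at some $\lambda_0$. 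By Maximum Principle~(1), an interior first contact would force global coincidence of the two minimal surfaces, impossible since the catenoid is unbounded and $u$ is not; hence the first contact is over $\partial\Omega$. The construction of $\varphi$ then localizes the contact to above $p$ (respectively one of $\{p,q\}$), and the contact condition determines $\lambda_0=\varphi(p)$. To extract the actual contradiction, I would combine the contact at $p$ with the local asymptotics of the catenoid near its waist (where $h_{r_0}$ vanishes with a square-root rate and $|\nabla h_{r_0}|$ diverges) and with the boundary regularity of $u$ (via Morrey's result cited in Section~\ref{coordinates}): along a path in $\bar\Omega$ approaching $p$ tangent to $\partial D_{r_0}$, the inequality $h_{r_0}+\lambda_0\ge u$ must hold, yet comparing the respective leading-order expansions constrains $\partial_1 u(p)=\varphi'(p)$ to lie within a specific interval that can be violated by pre-choosing the slope of $\varphi$ at $p$.

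The main obstacle is precisely this quantitative comparison at $p$: because the catenoid graph has a vertical tangent plane along $\partial D_{r_0}$, a direct Hopf-lemma application is unavailable, and one must argue from the explicit rates of vanishing of $h_{r_0}$ against the Lipschitz expansion of $u$. A robust workaround is to shrink the barrier disk slightly to $\bar D_{r_0-\varepsilon}\subset\r^2\setminus\bar\Omega$ so that the catenoid becomes strictly separated from $\Gamma$ near $p$; the sliding argument then produces an interior first contact that directly violates Maximum Principle~(1), bypassing the singular boundary behavior. Case ($\star\star$) introduces the additional geometric constraint of simultaneous exterior tangency at two points, which essentially pins down $r_0$; feasibility follows from the characterization of ($\star\star$) placing $\wt\gamma$ on one closed side of $L$. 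Since neither the catenoid construction nor the sliding argument uses features specific to $\tau\neq 0$, the same proof runs verbatim in $\r^3$ with the Euclidean catenoid replacing the $Nil_3$ vertical catenoid, as the authors assert.
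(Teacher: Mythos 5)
Your barrier is the wrong catenoid, and the construction around it does not close. The paper's proof uses the \emph{horizontal} catenoids $\mathcal{C}_\alpha$ of Section \ref{hor-catenoid}, which are properly embedded annuli (not graphs) whose projections $\{|x_1|\le\alpha\cosh(x_2/\alpha)\}$ can be slid into the non-convex notch so as to cover a piece of $\Omega$ while their boundary curves \emph{miss} the arc $\gamma_1$ carrying the large boundary data; the data is set to $3D$ on $\gamma_1$ and $0$ elsewhere, where $D$ bounds the height of the catenoids over the slab containing $\Gamma$, and this $3D$-versus-$D$ gap is exactly what forces the first contact point, when the catenoid is lowered, to be interior to both surfaces. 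Your rotational catenoid graph $h_{r_0}$ has no analogue of this mechanism: when you lower $h_{r_0}+\lambda$ onto the graph of a bounded function $u$ over $\bar\Omega$, nothing prevents the first contact from occurring over $\Gamma$, which contradicts neither version of the stated maximum principle. Your attempt to rule this out is internally inconsistent: you require $\varphi(p)>0=h_{r_0}(p)$ and simultaneously $\varphi\le h_{r_0}$ on $\Gamma\setminus\{p\}$, but $h_{r_0}$ vanishes continuously along $\partial D_{r_0}$, so any continuous $\varphi$ with $\varphi(p)>0$ exceeds $h_{r_0}$ on a whole sub-arc of $\Gamma$ near $p$, not just at $p$. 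The fallback via ``comparing leading-order expansions at the vertical tangency'' is not an argument, and your $\varepsilon$-shrinking workaround only moves the barrier strictly above height $0$ near $p$ without removing the possibility of boundary contact elsewhere.

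There is also a geometric gap at the very first step: for a boundary that is merely $C^0$ (or even $C^1$) and non convex, there need be no round disk $\bar D_{r_0}\subset\r^2\setminus\Omega$ tangent to $\Gamma$ at the distinguished point. In case $(\star)$ the line $L$ through $p$ lies in $\Omega$ on both sides of $p$, so $\r^2\setminus\Omega$ is pinched to the single point $p$ along $L$ and can be a cusp there, admitting no inscribed disk; an exterior-sphere condition is an additional regularity/geometry hypothesis that the theorem does not grant. The paper avoids this entirely: it never inscribes anything in the complement, but instead exploits the flatness of $\partial\pi(\mathcal{C}_\mu)$ over the slab $|x_2|\le B$ (the width-$\varepsilon/4$ strip estimate) to position the catenoid's projection against the arc $\gamma_1$ produced by the $(\star\star)$ configuration. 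If you want to salvage a proof along your lines you would need to replace the unbounded upper half of the vertical catenoid by a bounded annular piece with controlled height and blowing-up gradient at the waist (the classical Finn--Jenkins--Serrin gradient obstruction), which is a different argument from the one you wrote.
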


\begin{proof}
 We do the proof for $\tau=\frac{1}{2}$. It will be clear that the proof 
 is analogous for any $\tau>0$ (by Remark \ref{gen-catenoids}).
We will do the proof  for $\varphi$ of class $C^0.$
By Remark \ref{translations} (B), it is enough to prove the result for
$\psi(\Gamma),$
where $\psi$ is any  Euclidean isometry of $\r^2$ and for some function
$\tilde\varphi$ over  $\psi(\Gamma)$.
The desired function $\varphi$ will be obtained by modify $\tilde\varphi$ 
accordingly.

\

We assume that $\Gamma$ satisfies ($\star\star$)  and we use the same
notations as there.
The proof in the case   of $\Gamma$ satisfying ($\star$)
 follows easily.

 \

 Without loss of generality, we can assume that $l$ is parallel to
 the $x_2$ axis and    that the $x_1$ axis intersects $l$ at its middle point
 $p_0$ of coordinates $(d,0).$
 Moreover  we assume that  each  point of  $l\cap \wt \gamma$ is
 a local minimum of $\wt \gamma$ for the coordinate $x_1,$ since the case where
 each  point of  $l\cap \wt \gamma$  is  a local maximum for
the  coordinate $x_1$ can be handed analogously (see Figure \ref{picture1}).

\

\

 \begin{figure}[!h]
 \centerline{\includegraphics[scale=0.4]{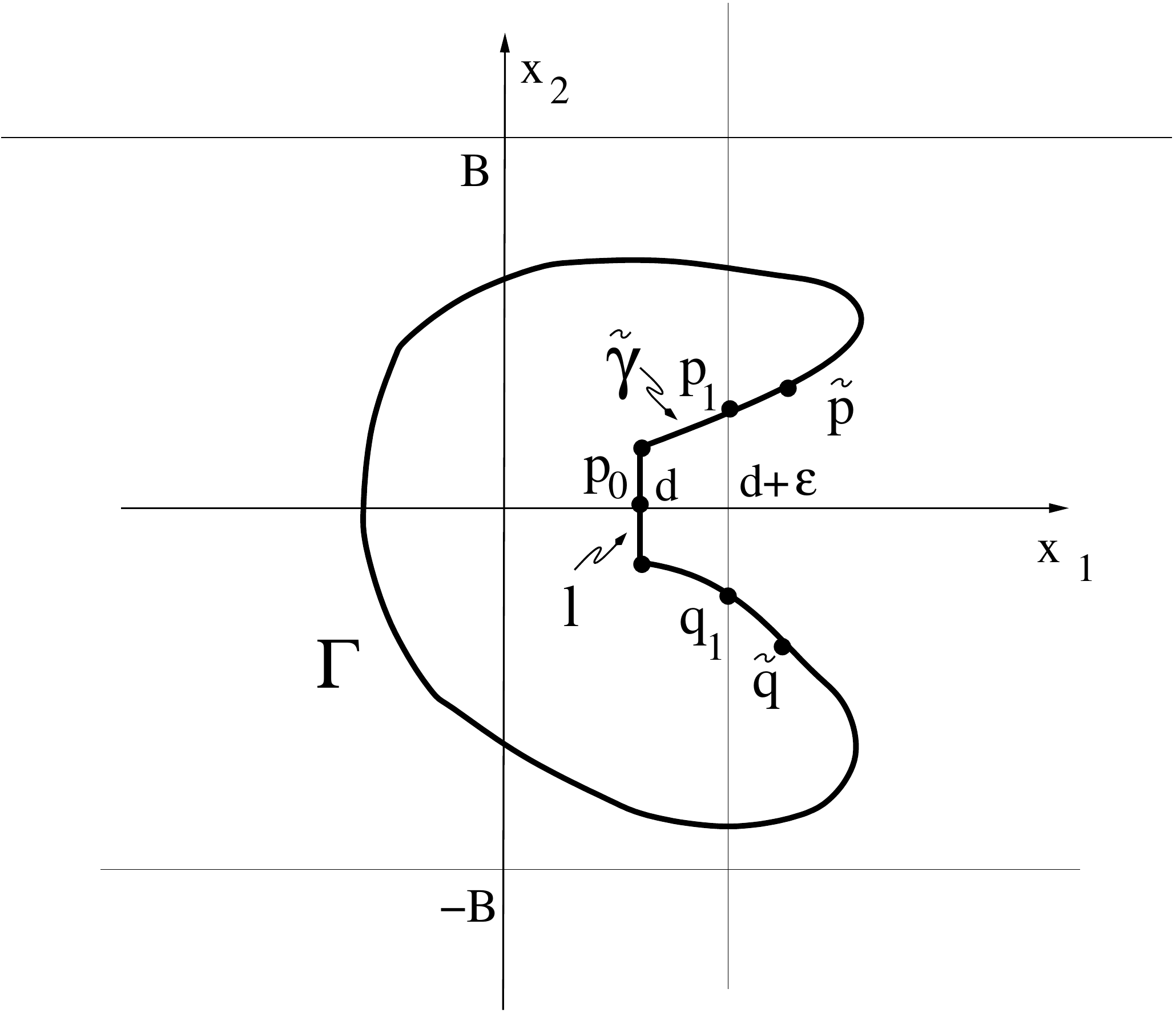}}
 \caption{ The curve  $\Gamma$}
 \label{picture1}
 \end{figure}

We will consider the continuous 1-parameter   family of catenoids
${\mathcal C}_{\alpha}$, $\alpha >0$,  described in
Section \ref{hor-catenoid}. Recall that the
projection in the $x_1$-$x_2$ plane
is the following subset of $\r^2$:
\begin{equation*}
 \left\{(x_1,x_2)\in\r^2,  \ |x_1|\leq \alpha\cosh
\left(\frac{x_2}{\alpha}\right)\right\},  \alpha>0.
\end{equation*}
Notice that the projection $\pi({\mathcal C}_{\alpha})$ is  equal to the
projection
of an Euclidean catenoid with axis $x_2$ on the $x_1$-$x_2$ plane. Therefore,
the projections are obtained one from
the other by an homothety  and, next to the waist, they become flatter as $\alpha$ increases.

\medskip

 Let $\varepsilon>0$ and let $p_1\in \wt \gamma$
(resp. $q_1 \in \wt \gamma$), be the first point on $\wt \gamma$ with $x_1$
coordinate equals to $d+\varepsilon$, coming from $p$ (resp. $q$) and going to
$\wt p$ (resp. $\wt q$). Such points exist if $\varepsilon$ is small 
enough.
We denote by $\gamma_1$ the sub-arc of $\wt \gamma$ with
endpoints $p_1$ and $q_1$. 

\smallskip

As $(\star\star)$ holds, the points of $\gamma_1\setminus l$
have $x_1$ coordinate strictly greater that $d.$

\smallskip

 Let $B$ a positive constant such that
 $\Gamma\subset \{(x_1,x_2)\in \R^2 ,  \ |x_2|\leq  \frac{B}{2}\}.$

Consider the unique catenoid ${\mathcal C}_{\mu}$ such that
\begin{equation}
\mu+\frac{\varepsilon}{4}=\mu\cosh \frac{B}{\mu}.
\end{equation}
 This means that the portion of $\pi ({\mathcal C}_{\mu})$ defined by:
\begin{equation*}
 \pi ({\mathcal C}_{\mu}) \cap
\{(x_1,x_2)\in \R^2, \ x_1 \geq 0,\ 0\leq x_2 \leq B\},
\end{equation*}
is contained in the following vertical strip of $\R^2$ of
width $\varepsilon /4$:
\begin{equation*}
 \{(x_1,x_2) \in \R^2,\ \mu \leq x_1 \leq \mu +\frac{\varepsilon}{4}\}.
\end{equation*}
Up  to  translate $\Gamma$ along the $x_1$ axis, we can assume that the $x_1$
coordinate of $p_0$ is $d=\mu+\frac{\varepsilon}{4}$.  This
choice guarantees that
$\pi({\mathcal C}_{\mu})$  does not intersect $\gamma_1.$

Then, deform $\pi({\mathcal C}_{\alpha})$  by an homothety from the origin,
$\alpha$ going from $\mu$ to $\mu + \varepsilon /4$,
 in order to find a first contact point with the curve
$\gamma_1$.
Let $\mu'\in (\mu, \mu + \varepsilon/4)$ such that $\pi(C_{\mu'})$ 
is the desired homothetic image 
(see Figure \ref{picture2}).

\begin{figure}[!h]
 \centerline{\includegraphics[scale =0.4]{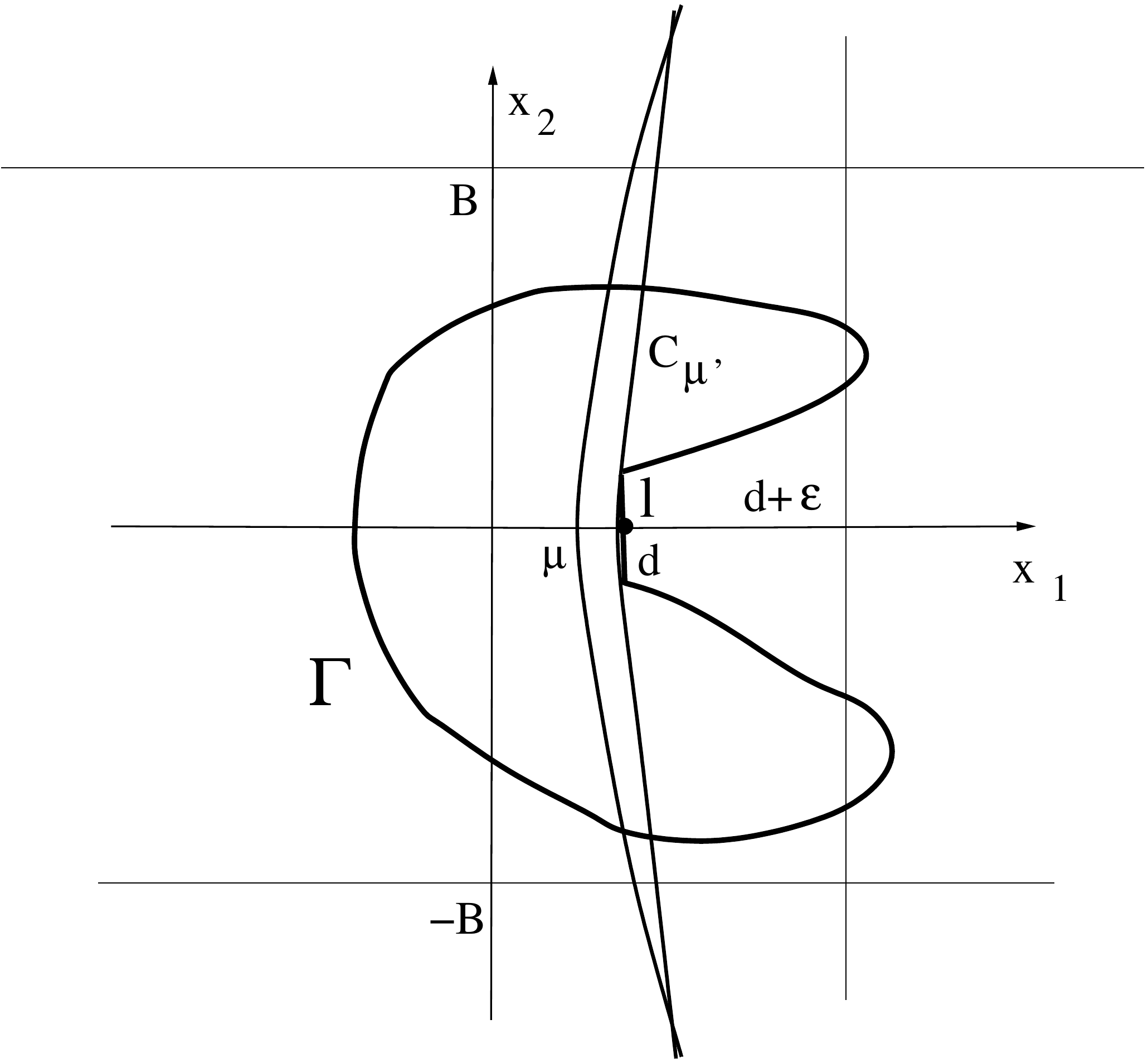}}
 \caption[]{ The deformation of ${\mathcal C}_{\mu}$ }
 \label{picture2}
\end{figure}

\

By our choice of $\mu$ and $d$ and because of the geometry of  the curves    
$\pi({\mathcal C}_{\alpha}),$ we can assume that the contact points between
$\pi({\mathcal C}_{\mu'})$ and $\gamma_1$ are interior points of $\gamma_1.$
Denote by $q_0$ one of the contact points and   notice that
 $q_0$ does
not belong to the interior of $l$.
 Denote by $ \gamma_0$ the sub-arc of $\gamma_1$
whose endpoints are $q_0$ and one of the endpoints of $l$, and such that
$\gamma_0 \cap l$ is infinite.

Let $D >0$ be such that, for any $\alpha\in[\mu,\mu']$ and any 
$p\in{\mathcal C}_{\alpha}\cap \{(x_1,x_2,x_3)\in\R^3, \ |x_2|\leq B\}$, we 
have
$|x_3(p)|\leq  D.$

Now define  a continuous, non negative  function $\tilde\varphi$ on $\Gamma$ such that
\begin{equation*}
 \tilde\varphi=\left\{\begin{array}{l}
0 \  \ {\rm on}\ \Gamma\setminus\tilde\gamma\\
3D \  \ {\rm on}\ \gamma_1\\
\end{array}\right.
\end{equation*}
Assume that there exists a minimal extension $u$ of $\tilde\varphi$
over $\bar \Omega.$
As the function $u$ is continuous in $\bar\Omega,$   for any $\eta_1>0$ 
there exists $\eta_2>0$
such that
\begin{equation}
\vert u(\wh p)-3D\vert\leq \eta_1, \ \ \ {\rm for \ any}\
\wh p=(x_1,x_2)\in \ov \Omega,  \  x_1(q_0)-\eta_2\leq x_1\leq x_1(q_0),  \ 
x_2=x_2(q_0)
\end{equation}
where $x_i(q_0)$ is the $x_i$ coordinate of $q_0$, $i=1,2$.

This means that $u$ have a small variation on a small
segment, $\delta,$ parallel to the $x_1$ axis,  $\delta$ starting at $q_0$
and having $x_1$ coordinates less than    $x_1(q_0)$.

Consider a  homothety  $\pi({\mathcal C}_{\mu''})$ of
$\pi({\mathcal C}_{\mu'})$ such that $\mu''\in(\mu,\mu')$ is very close to
$\mu'.$  As ${\mathcal C}_{\mu'}$ is the first catenoid
of the family, such that its projection touches $\gamma_1$, we 
have  
$\pi({\mathcal C}_{\mu''})\cap\gamma_1=\emptyset$ and
$\pi({\mathcal C}_{\mu''})\cap\delta\not=\emptyset.$

Now, translate vertically (along the $x_3$ axis) upward ${\mathcal C}_{\mu''}$
such that the translation of ${\mathcal C}_{\mu''}$ does not touch the graph of
$u.$
Then translate vertically downward till the first contact point between the
translation of ${\mathcal C}_{\mu''}$ and the graph of $u$ occurs.
We observe that our construction yields that the first contact point 
between the translation of ${\mathcal C}_{\mu''}$
and the graph of $u$

\begin{itemize}
\item occurs before that ${\mathcal C}_{\mu''}$ touches $x_3=0,$
\item does not occurs at a boundary point  of the graph of $u.$

\end{itemize}

Then, the first contact point between the translation of ${\mathcal C}_{\mu''}$
and the graph of $u$ is an interior point of both surfaces.
This is a contradiction by the maximum principle.
\end{proof}

 In the  next theorem, we extend the result of Theorem \ref{non-existence} 
 to the case of  unbounded domains.

\begin{theo}
\label{non-existence-unbounded}
Let $\Omega$ be an unbounded  and non convex domain.
Assume that the boundary $\Gamma=\partial \Omega$  is composed by a finite 
number 
of connected components, each one being a properly embedded (possibly compact)  
$C^k$ curve, $k\geq 0.$ Then
there exists a
$C^k$ function
$\varphi$ on $\Gamma$ that does not admit a minimal extension over  $\bar\Omega.$
\end{theo}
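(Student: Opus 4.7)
The plan is to adapt the proof of Theorem \ref{non-existence} to the unbounded setting, exploiting the fact that the non-convexity witness and the catenoid barrier construction are intrinsically local. Since $\Gamma$ is non-convex as a subset of $\R^2$, an analogous dichotomy $(\star)$/$(\star\star)$ as in Section \ref{Dirichlet} yields a witness contained in a compact portion of $\Gamma$: either a straight line through a boundary point with a local segment inside $\Omega$, or a bounded sub-arc $\wt\gamma$ of some component of $\Gamma$ together with a chord segment $\hat\ell$ whose complement $\hat\ell\setminus\ell$ lies in $\Omega$.

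After applying an Euclidean isometry (which lifts to an isometry of $Nil_3(\tau)$ by Remark \ref{translations}), I may assume $\ell$ is parallel to the $x_2$-axis with midpoint $p_0=(d,0)$, and then reproduce verbatim the local construction of the bounded case: the compact sub-arc $\gamma_1\subset \wt\gamma$, a constant $B>0$ with $\gamma_1\cup\ell\subset\{|x_2|\leq B/2\}$, the horizontal catenoid $\mathcal{C}_\mu$ satisfying $\mu+\varepsilon/4=\mu\cosh(B/\mu)$, the first-contact catenoid $\mathcal{C}_{\mu'}$ touching $\gamma_1$ at an interior point $q_0$, and the uniform bound $D$ on $x_3$ along $\mathcal{C}_\alpha\cap\{|x_2|\leq B\}$ for $\alpha\in[\mu,\mu']$. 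I then define $\tilde\varphi\in C^k(\Gamma)$ by $\tilde\varphi\equiv 3D$ on $\gamma_1$, $\tilde\varphi\equiv 0$ on $\Gamma\setminus\wt\gamma$, and interpolating smoothly on $\wt\gamma\setminus\gamma_1$; in particular $\tilde\varphi$ is bounded and vanishes on the unbounded components of $\Gamma$.

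Suppose by contradiction that a minimal extension $u$ of $\tilde\varphi$ to $\bar\Omega$ exists. The key new difficulty, compared with the bounded case, is that the projection $\pi(\mathcal{C}_{\mu''})$ of the slightly smaller catenoid may intersect $\Omega$ in an unbounded set, so it is not immediately clear that a vertical translate of $\mathcal{C}_{\mu''}$ can be lifted above the graph of $u$ over that intersection. I resolve this by localizing: fix a closed ball $\overline{B_R}\subset\R^2$ centered at $q_0$, with $R$ much larger than $B$ and than the scale of the local construction; then the restriction of the graph of $u$ to $\bar\Omega\cap\overline{B_R}$ is continuous on a compact set, hence bounded. A sufficiently high vertical translate of $\mathcal{C}_{\mu''}$ therefore lies strictly above this local piece of the graph, and sliding downward continuously produces a first contact point whose projection lies in $\pi(\mathcal{C}_{\mu''})\cap\overline{B_R}$. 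As in the bounded proof, the small-variation argument along the segment $\delta$ through $q_0$ shows that this contact is reached before the catenoid descends to the plane $x_3=0$ and cannot occur at a boundary point of the graph of $u$; taking $R$ large enough further guarantees it does not lie on $\partial B_R$. The resulting interior tangency between the translated complete minimal surface $\mathcal{C}_{\mu''}$ and the graph of $u$ contradicts the interior maximum principle.

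The main obstacle is precisely this localization step: the catenoid barrier argument, originally designed for a compact graph, must be applied locally against a possibly unbounded minimal graph without the first-contact point escaping to infinity or to the artificial ball boundary $\partial B_R$. This is secured by positioning $q_0$ deep inside $\overline{B_R}$ and using continuity and boundedness of $u$ on the compact set $\bar\Omega\cap\overline{B_R}$.
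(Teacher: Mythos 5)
Your overall strategy is the same as the paper's: reduce to a compact non-convexity witness lying on a single boundary component and rerun the catenoid argument of Theorem \ref{non-existence}. The paper's reduction is a short explicit argument (take $p,q\in\Gamma$ whose joining segment $I$ leaves $\ov\Omega$, take a connected component $J$ of $I\setminus\ov\Omega$, and use the connectedness of $\Omega$ to conclude that the endpoints of $J$ lie on the same component of $\Gamma$; these play the roles of $\wt p,\wt q$), whereas you merely assert that ``an analogous dichotomy'' holds; since $(\star)/(\star\star)$ were formulated only for Jordan curves bounding bounded domains, that assertion does need the paper's little connectedness argument, but this is minor. To your credit, you isolate the one point at which ``the proof is analogous'' is not automatic: the vertical sliding/first-contact argument is now run against a graph that need not be compact, nor even bounded.

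Your resolution of that point, however, contains a genuine gap: the claim that ``taking $R$ large enough further guarantees [the first contact] does not lie on $\partial B_R$'' is unsupported, and I do not see why it should be true. Bounded boundary data does not bound a minimal graph over an unbounded domain in $Nil_3$ --- the paper's own Theorem \ref{P.wedge-gen}, or the example $u_0=\tau x_1x_2$ over a quadrant, produces quadratic growth with zero boundary data --- so $\sup_{\ov\Omega\cap\ov{B_R}}|u|$ may tend to $+\infty$ with $R$; hence both the initial height of the translate and the first-contact time depend on $R$ (the first-contact time is non-decreasing in $R$), and nothing prevents the contact point from sitting on the artificial boundary $\partial B_R$ for \emph{every} $R$, in which case the graph of $u$ need not lie locally on one side of the catenoid and no contradiction follows. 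The localization should rather be performed on the comparison surface: truncate the catenoid to $\wt{\mathcal C}_{\mu''}:={\mathcal C}_{\mu''}\cap\{|x_2|\le B\}$, exactly as the paper does with $\wt{\mathcal C}_{\alpha(n_0)}$ in the proof of Theorem \ref{existence-unbounded}. Then the only portion of the graph that can meet the translate lies over the compact set $\ov\Omega\cap\pi({\mathcal C}_{\mu''})\cap\{|x_2|\le B\}$, on which $u$ is bounded, so the descent to a first contact is legitimate; one must then still rule out a first contact occurring on the truncation circles $x_2=\pm B$, which are boundary points of the comparison surface where the maximum principle does not apply. This last exclusion is not addressed in your write-up (nor made explicit in the paper), and it is the step that actually needs an argument in the unbounded case.
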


\begin{proof}
As $\Omega$ is non convex, there exist points $p,q\in\Gamma$ such that the 
segment $I$ joining $p$ and $q$ is not entirely contained in
$\bar\Omega.$  Let $J$ be a connected component of the complement of
$I\cap\bar\Omega$ in $I.$ Since $\Omega$ is connected and $J$ is an open 
segment contained in the complement of $\bar\Omega,$ the  
endpoints of $J$ belong to the same connected component of $\Gamma.$
From now on, the proof is analogous to the proof of Theorem   
\ref{non-existence}, 
taking as $\tilde p,$ $\tilde q,$  the endpoints of the segment $J.$
\end{proof}

\begin{remark} We notice that  our  proofs of Theorem \ref{non-existence}
and  \ref{non-existence-unbounded} hold in $\r^3$ ($\tau=0$), as well.
In the case of bounded domain, an almost analogous result is
stated by H. Jenkins and J. Serrin in
\cite{JS1}, page 185. The result in $\r^3$  for unbounded domain
was unknown, to the best of our knowledge.
\end{remark}

\section{Existence Results in $Nil_3(\tau)$}
\label{S.existence}

\subsection{Compactness Theorem}
\label{compactness-theo}

In the proof of  existence results either on unbounded domain  or
with infinite boundary data,  we will use
strongly
the {\em Compactness Theorem for minimal graphs in $Nil_3(\tau)$}.
Despite of the fact that it is a classical result for the  minimal surface
equation
in several ambient spaces, we
 clarify  which are the main ingredients of the proof in $Nil_3(\tau)$.

 The
Compactness
Theorem yields that any $C^{2,\alpha}$ bounded sequence $(u_n)$  of solutions of the
minimal surface equation on a domain $\Omega$ in $\r^2$
 admits a subsequence that converges uniformly in the
$C^2$ topology, on any compact subset of  $\Omega$ to a solution of the minimal
surface equation.
The  proof of this  result relies on  Ascoli-Arzel\'a Theorem, so one need to prove that
the sequence $(u_n)$ is uniformly bounded in the $C^{2,\alpha}$ topology. By
Schauder theory, the $C^{2,\alpha}$ {\em
a-priori} estimates follow  from the $C^{1,\beta}$  {\em a-priori} estimates.
These last estimates follow, by Ladyzhenskaya-Ural'ceva theory, from $C^1$
{\em
a-priori} estimates.  By  \cite[Theorem 3.6]{RST},  such estimates
follow from uniform height estimates. As  \cite[Theorem 3.6]{RST} 
is stated in a
more general  situation, we state it on our case.

\

{\bf Theorem.} [ \cite[Theorem 3.6]{RST}]  {\em Let
$\pi:Nil_3(\tau)\longrightarrow
\r^2$
be the Riemannian submersion of Heisenberg space on $\r^2.$ Let
$\Omega\subset\r^2$ be a relatively compact domain
and $u:\Omega\longrightarrow\r$ be a $C^2$ section of the submersion
$($i.e. a  vertical graph$)$ satisfying \eqref{minimal}. 
 Assume that $C_1$ is a positive constant such that 
$\lvert u\lvert <C_1$ on $\Omega$. 
Then, for any positive
constant  $C_2$, there exists a constant
$\alpha=\alpha(C_1,C_2,\Omega)$ such that  for any $p\in\Omega$ with
$d(p,\partial\Omega)\geq C_2$,  we have
\begin{equation*}
W_u(p)<\alpha .
\end{equation*}
}

\

The uniform bound on $W_u,$ defined as in \eqref{gradient},
clearly gives  $C^1$ a-priori estimates on any compact subset of 
$\Omega$.

\begin{remark}\label{height}
Let $(u_n)$ be  a sequence of $C^2$ functions satisfying the minimal 
surface equation on a domain $\Omega.$  We deduce from above that, once we have 
uniform height estimates for $(u_n),$ there exists 
a subsequence  of $(u_n)$ that converges $C^2$ on any compact subset 
of $\Omega$ to a solution of the minimal surface equation.
\end{remark}

\subsection{Construction of Barriers}
\label{barrier}
In  the proof of existence results, in order to prove that the solutions 
takes the given boundary value,  it is important to get
barriers at a convex point of the boundary, where the boundary data is
continuous.  Our construction of barriers is strongly inspired by the 
analogous construction in $\h^2\times\r,$ by the second and the third author
in \cite{ST}.
Let us first define a convex point of a domain $\Omega.$

\begin{definition}
Let
$\Omega \subset \R^2$ be a domain. We say that a boundary point
$p\in \partial \Omega$ is a {\em convex point of $\Omega$} if there exist an
open neighborhood $V$ of $p$ in $\R^2$ and a straight line $L\subset \R^2$
passing through $p$  such that:
\begin{itemize}
 \item $V\cap \Omega$ stays in one side  of $L$,

 \item $(V\cap L) \cap \Omega = \emptyset$.
\end{itemize}
\end{definition}

Let $\Omega$ be a  (not necessarily bounded)  domain and let 
$\varphi:\partial\Omega\longrightarrow \R$ be a function.
We recall what is a barrier at a point $p_0\in\Gamma:=\partial\Omega$ 
with respect to the function $\varphi.$

Let $p_0$ be a  point of $\Gamma.$  One says that
$p_0$ admits un  {\it upper (lower) barrier} with respect to $\varphi$ 
if the following holds.
For any positive $M$ and any $k\in{\mathbb N},$ there exists an
open set $V_k$ containing $p_0$ in its boundary and a function
$\omega^+_k$ (resp. $\omega^-_k$)  in
$C^2(V_k\cap \Omega)\cap C^0(\overline{V_k\cap \Omega})$ such that

\begin{enumerate}
\item $\omega^+_k(q)_{|\partial \Omega\cap \bar{V_k}}\geq \varphi(q),$
$ \omega^+_k(q)_{|\Omega\cap \partial V_k}\geq M$
(resp. $\omega^-_k(q)_{|\partial \Omega\cap \bar{V_k}}\leq \varphi(q)$,
$ \omega^-_k(q)_{|\Omega\cap \partial V_k}\leq -M$).

\item ${\mathcal D}(\omega^+_k)\leq 0$
(resp. ${\mathcal D}(\omega^-_k)\geq 0$) where $\mathcal D$ is defined in
\eqref{minimal} .

\item $\omega^+_k(p_0)=\varphi(p_0)+\frac{1}{k}$
(resp.  $\omega^-_k(p_0)=\varphi(p_0)-\frac{1}{k}$).

\end{enumerate}

Now, let $p_0\in \Gamma$ be a convex point of $\Omega$ and 
assume that  $\varphi$ is continuous at $p_0$.
 Let $M$ be any positive real number.
We show how to construct an upper  barrier at the  point $p_0$  
(see Figure \ref{picture3}).

 \begin{figure}[h]
 \centerline{\includegraphics[scale =0.4]{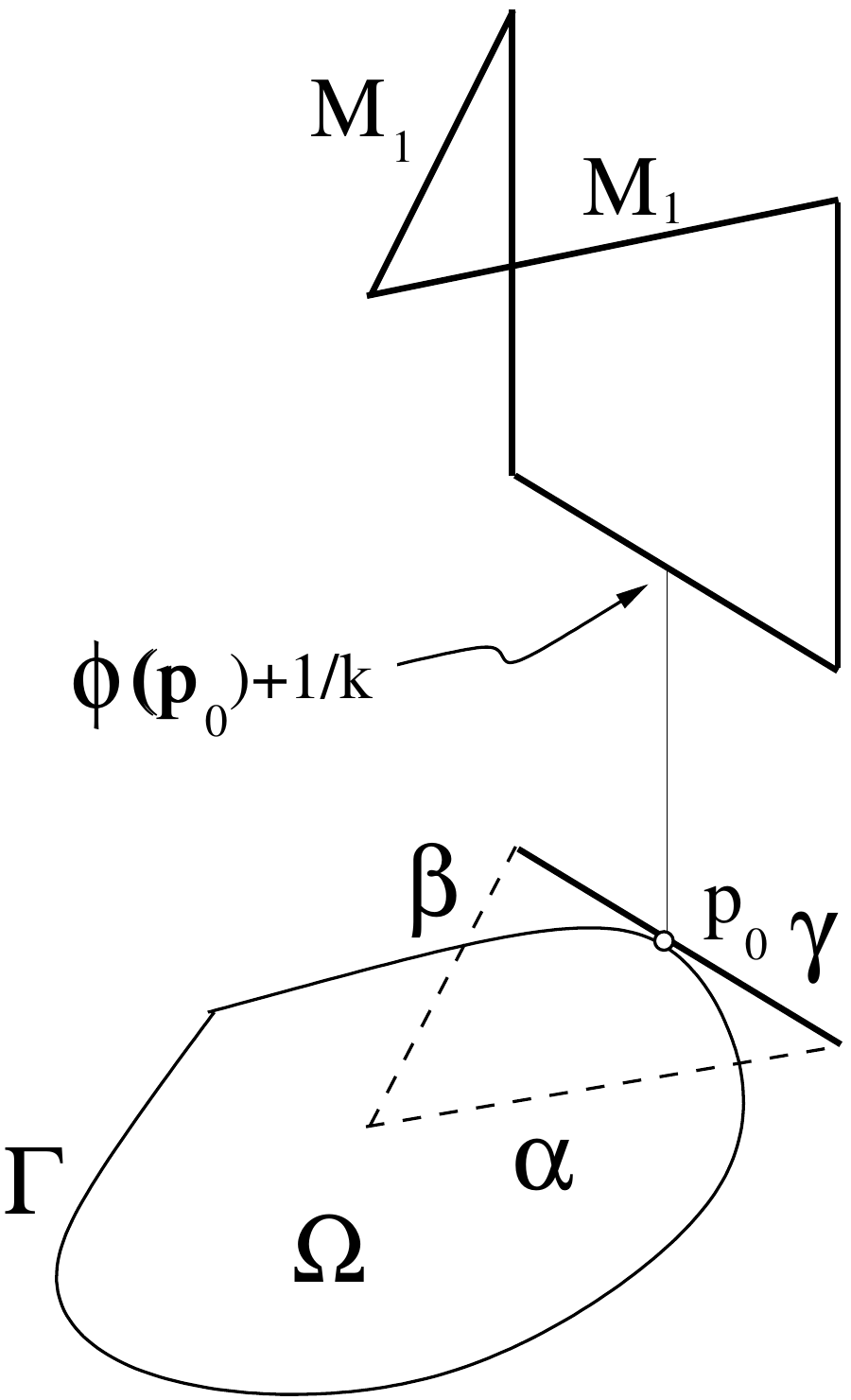}}
 \caption[]{ The upper barrier }
 \label{picture3}
 \end{figure}

 Consider a triangle  $T$ in $\r^2$ with sides $\alpha,$
 $\beta,$ $\gamma.$ Let $A,B,C$ be the vertices of $T$ 
 labeled such that $A,B,C$ are  opposite to  $\alpha,$ $\beta,$ $\gamma$ 
respectively.
Let $M_1$ be a positive constant  
 to be chosen later and  let $\tilde A,$
($\tilde B,$ $\tilde C$) be the endpoints of the vertical segment 
above $A$ ($B,$  $C$ respectively) of  Euclidean lenght $M_1$. Let 
$\tilde \alpha$ ($\tilde \beta$ respectively) be the  segment  
projecting one to one
on $\alpha$
($\beta$ respectively) with endpoints  $\tilde B,$  $\tilde C$ ($\tilde A,$
$\tilde C$ respectively) and with third coordinate  equal to $M_1.$
Denote by $L_B$ ($L_A$)  the vertical segment  between $B$ and $\tilde B$
($A$ and $\tilde A$ respectively). We solve the Plateau problem with boundary
$\gamma,$ $L_B,$ $L_A,$ $\tilde \alpha,$
$\tilde\beta$ (see \cite{M2}).   The solution of the Plateau problem   
is a graph in the interior
of $T$ of a function $v$ having  value zero   on the interior of $\gamma$ and
value  $M_1$ on $\alpha\cup\beta$ (see the
proof of  \cite[Theorem 1]{ADR}).

As $\varphi$ is continuous,  there exists $\varepsilon>0$ such that, for
any $q\in\partial\Omega$ such that $d(p_0, q)<\varepsilon,$ one has
$\varphi(q)<\varphi(p_0)+\frac{1}{k}.$

In the definition of barrier, let $V_k$ be  the triangle $T$ 
defined  above, such that $\gamma$ touches
$\Gamma$ at $p_0,$ $\gamma\cap \Omega=\emptyset$  and
$T\cap\Omega\not=\emptyset.$
Moreover, choose $T$ such that for any $q\in T\cap \partial \Omega,$
one has $d(q,p_0)<\varepsilon.$

 We choose $M_1$ such that  
$M_1>\max(M, \varphi(p_0)+\frac{1}{k})$ and we set
$\omega^+_k=v+\varphi(p)+\frac{1}{k}$,
where $v$ is the function on  $T$
described   above.

Hence the function $\omega^+_k$ satisfies the required properties.

\smallskip

The construction of the lower barrier is analogous.

\subsection{Existence on Bounded Domains}
\label{existence-bounded-section}

We state an existence result and we prove it  by using
classical tools of minimal surfaces theory.  See 
\cite[Corollary 4.1, page  325]{ST} for analogous results 
in $\h^2\times\r.$
 In  \cite[Theorem 1]{ADR} or
\cite[Theorem 1.1]{P}     one finds existence results in $Nil_3,$ 
with different regularity assumptions.

\begin{theo}\label{existence}
 Let $\Omega\subset\r^2$   be a bounded convex domain.
Let $\varphi$  be a   continuous
function  on  $\Gamma=\partial \Omega$ except, possibly, at a finite
number of points $\{p_1,\dots,p_n\}$, where $\varphi$ has left and right limits.

 Then, there exists a unique
minimal extension $u$ of $\varphi$ over $\bar\Omega$.
Moreover, the boundary of the graph of $u$ in $Nil_3(\tau)$
is the Jordan curve $\gamma$ given by the graph of $\varphi$ over
$\Gamma\setminus \{p_1,\dots,p_n \}$ and the vertical segments between the left
and right limit of $\varphi$ at any $p_1,\dots ,p_n$.
\end{theo}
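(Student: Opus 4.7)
My plan is to follow the classical Perron/approximation strategy, proving existence first for continuous $\varphi$ via barriers and then extending to piecewise continuous $\varphi$ by approximation and the Compactness Theorem of Section \ref{compactness-theo}. Uniqueness throughout is immediate from the interior maximum principle, since two solutions sharing the continuous boundary values (on $\Gamma\setminus\{p_1,\dots,p_n\}$) would differ by a function whose maximum/minimum on the closure cannot be interior.

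For the continuous case ($n=0$), I would observe that since $\bar\Omega$ is convex every $p\in\Gamma$ is a convex point in the sense of Section \ref{barrier}. Thus the construction given in that section produces, at every boundary point, upper and lower barriers $\omega^\pm_k$ with $\omega^+_k(p)-\omega^-_k(p)$ arbitrarily small. This is precisely the hypothesis needed to run the Perron process for the divergence-form equation \eqref{minimal}: define $u$ as the supremum of all subsolutions lying below $\varphi$; interior $C^2$ regularity comes from the a priori gradient estimate \cite[Theorem 3.6]{RST} and the Schauder/Morrey theory already invoked in Section \ref{coordinates}, and attainment of the boundary values at every $p\in\Gamma$ follows by squeezing $u$ between $\omega^-_k$ and $\omega^+_k$ and letting $k\to\infty$.

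To handle the discontinuous case I would regularize: choose continuous $\varphi_n:\Gamma\to\R$, uniformly bounded by $M:=\sup|\varphi|$, with $\varphi_n=\varphi$ outside the $1/n$-neighborhood of $\{p_1,\dots,p_n\}$ and with $\varphi_n\to\varphi$ pointwise on $\Gamma\setminus\{p_1,\dots,p_n\}$ (and, say, affinely interpolating between the one-sided limits near each $p_i$). The previous step yields solutions $u_n$; by the maximum principle $|u_n|\leq M$ on $\bar\Omega$. Remark \ref{height} then allows one to extract a subsequence converging in $C^2$ on compact subsets of $\Omega$ to a solution $u$ of \eqref{minimal}. Attainment of $\varphi$ at a continuity point $p\in\Gamma$ is verified by applying the barriers of Section \ref{barrier} to $u$ itself: given $\varepsilon>0$, for $n$ large enough $|\varphi_n-\varphi(p)|<\varepsilon$ on a fixed neighborhood of $p$, so each $u_n$ is trapped between $\omega^-_k$ and $\omega^+_k$ of that neighborhood, a property that passes to the $C^0$ limit on compact subsets meeting $\Gamma$ near $p$.

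The delicate point, and in my view the main obstacle, is the identification of the boundary of the graph of $u$ as the Jordan curve $\gamma$: one must show that over each discontinuity point $p_i$ the closure of the graph contains the entire vertical segment joining the left and right limits $\varphi^-(p_i)$ and $\varphi^+(p_i)$, with no extra pieces. The upper bound on the vertical interval is given by $[\inf_{\Gamma}\varphi,\sup_{\Gamma}\varphi]$ via the maximum principle applied to each $u_n$. To realize the full segment, I would argue as follows: approach $p_i$ along a sequence $q_k\in\Omega$ with $q_k\to p_i$ such that, for a chosen $t\in[\varphi^-(p_i),\varphi^+(p_i)]$, one can produce boundary curves on $\Gamma$ near $p_i$ on which $\varphi_n$ takes the value $t$; the barrier argument near nearby continuity points then forces $u_n$, and hence $u$, to take values close to $t$ along suitable interior curves accumulating at $p_i$, so that $(p_i,t)$ lies in the closure of the graph. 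Conversely, no boundary point over $p_i$ can lie outside $[\varphi^-(p_i),\varphi^+(p_i)]$: points on $\Gamma$ arbitrarily close to $p_i$ on either side are continuity points of $\varphi$ where the barrier argument already forces $u$ to approach $\varphi^\pm(p_i)$, and combining these with an interior barrier (e.g., a truncation of an \emph{a priori} minimal graph such as the vertical catenoid of Section \ref{examples} placed with its neck near $p_i$) prevents the graph from escaping the prescribed vertical interval. This is the step that requires the most care, and where the geometry of $Nil_3(\tau)$ enters in a nontrivial way.
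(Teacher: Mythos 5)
Your route is genuinely different from the paper's. The paper does not run a Perron process at all: it solves the Plateau problem (Morrey \cite{M2}) for the Jordan curve $\gamma$ of the statement, traps the resulting surface inside the Euclidean cylinder over $\bar\Omega$ by comparison with vertical planes (this is where convexity of $\Gamma$ enters), and invokes \cite{ADR} to conclude that $S\cap(\Omega\times\R)$ is a vertical graph. That approach has the advantage that the ``moreover'' clause --- the identification of the boundary of the graph with $\gamma$, vertical segments included --- comes for free from the Plateau construction. In your scheme this identification is, as you say, the delicate point, and your sketch of it (producing ``boundary curves on which $\varphi_n$ takes the value $t$'', and a vertical catenoid ``with its neck near $p_i$'' as an interior barrier) is not yet an argument; the standard way to carry it out is the one the paper uses in the proof of Theorem \ref{existence-unbounded}: the intermediate value theorem applied to $u$ on small arcs $\delta_n\subset\Omega$ joining continuity points on either side of $p_i$ yields the whole vertical segment, and the triangular barrier of Section \ref{barrier}, with value $h-\varepsilon$ on the side touching $\Gamma$ at $p_i$ and a large value $M$ on the other two sides, excludes any extra points.

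Beyond that, there are two genuine gaps. First, uniqueness. For continuous data the up-and-down argument works, but for piecewise continuous data uniqueness is \emph{not} ``immediate from the interior maximum principle'': if $u_1,u_2$ are two extensions, $\sup_\Omega(u_1-u_2)$ could a priori be positive and approached only along sequences converging to one of the discontinuity points $p_i$, where neither function is required to extend continuously, and the interior maximum principle applied to the difference says nothing about this. Excluding it is precisely the content of the Jenkins--Serrin general maximum principle \cite[Section 6]{JS}, adapted to $Nil_3$ by Pinheiro \cite{P} --- a flux argument exploiting the divergence form of \eqref{minimal} and the finiteness of the exceptional set. The paper cites this; your proof cannot do without it. Second, the Perron process itself: the lifting step requires solving the Dirichlet problem with continuous data on small subdomains, which is an instance of the very statement being proved, so as written the argument is circular. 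It can be repaired by invoking \cite[Theorem 1]{ADR} on small strictly convex $C^3$ subdomains for the lift, but that must be said explicitly; the barriers of Section \ref{barrier} alone only control boundary behavior, they do not produce the solution.
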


\begin{proof} 
We first prove the existence part.
  Let $S$ be a solution of the Plateau problem  for the
Jordan curve $\gamma$  defined in the statement (see \cite{M2}).   As $\Gamma$
is convex, we can compare $S$ with the vertical planes and  by the maximum
principle, we get that the surface $S$ is contained in the   Euclidean cylinder over
$\bar \Omega$.

Then, $S\cap (\Omega \times \R)$ is a graph,  as it is proved in \cite{ADR}.
 The uniqueness,  in the case of continuous boundary data, follows by a
 standard up and down argument  and  the maximum principle.
When $\varphi$ has a finite number of discontinuity points, one uses
the classical argument of H. Jenkins and J. Serrin \cite{JS} adapted
to the metric of $Nil_3,$ by  A. L. Pinheiro  \cite[Theorem 2.1]{P}. 
\end{proof}

\begin{remark}
The result analogous to Theorem \ref{existence} in $\h^2 \times \R$ and
$\widetilde{PSL_2(\r)}$ are consequences of  the results
in \cite{NR}, \cite{NR1}, \cite{ST}  and \cite{Y} respectively. 
The proof of Theorem \ref{existence}
holds in such spaces as well.
\end{remark}

 \subsection{Scherk's Type Surface}
\label{Scherk-section}

 \begin{theo}\label{Scherk-gen}
 Let $\Omega$ a bounded, convex domain such that $\partial \Omega=C\cup\gamma,$ 
where $C$ 
is a convex curve and $\gamma$ is a segment. Let $\varphi : C \rightarrow \R$ be 
a continuous function.
Then, $\varphi$ has a unique minimal extension  over $\Omega$ assuming the value $+\infty$
on the interior of $\gamma$.

 More precisely, there exists a unique continuous function
 $u : \Omega\setminus \gamma \rightarrow \R$ such that:
\begin{itemize}
 \item $u$ is $C^2$ on the interior of $\Omega$ and verifies the minimal surface
equation $(\ref{minimal})$,

\item $u(p_n) \to + \infty$ for any sequence $(p_n)$ of
{\rm int}$(\Omega)$ converging to an interior point of $\gamma$.

\item $u(p)=\varphi(p)$ for any $p\in C\setminus\partial C.$

\end{itemize}
\end{theo}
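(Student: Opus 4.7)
The plan is to construct $u$ as the monotone limit of a sequence of classical Dirichlet solutions, adapting the Jenkins--Serrin strategy to $Nil_3(\tau)$. For each integer $n\geq 1$, define $\varphi_n:\partial\Omega\to\R$ by $\varphi_n=\varphi$ on $C$ and $\varphi_n\equiv n$ on the interior of $\gamma$; this is continuous on $\partial\Omega$ except at the two endpoints of $\gamma$, where the one-sided limits exist. Since $\Omega$ is bounded and convex, Theorem \ref{existence} produces a unique minimal extension $u_n\in C^2(\Omega)\cap C^0(\bar\Omega\setminus\partial\gamma)$ of $\varphi_n$. The inequality $\varphi_n\leq\varphi_{n+1}$ on the boundary where both are defined gives, via the maximum principle, $u_n\leq u_{n+1}$ on $\Omega$, so $(u_n)$ is monotone nondecreasing. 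Set $u:=\lim_n u_n$, a priori taking values in $\R\cup\{+\infty\}$.

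The core step is to show that $(u_n)$ is locally uniformly bounded above on $\Omega\cup(C\setminus\partial\gamma)$, so that the limit $u$ is finite in the interior of $\Omega$ and on $C$ away from $\partial\gamma$. I would work in divergence form: the horizontal field $X_n:=\bigl((\tau x_2+\partial_1 u_n)/W_{u_n},\,(-\tau x_1+\partial_2 u_n)/W_{u_n}\bigr)$ is divergence-free on $\Omega$ with $|X_n|<1$ pointwise. Applying the divergence theorem to $X_n$ on suitable subdomains and passing to weak-$*$ limits of a subsequence, one analyses the divergence set $D:=\{p\in\Omega:u_n(p)\to+\infty\}$: its interior boundary $\partial D\cap\Omega$ must be a union of straight segments along which the limit of $X_n\cdot\nu$ equals $\pm 1$. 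Convexity of $\Omega$ gives the length inequality $|\gamma|<|C|$, and continuity of $\varphi$ on $C$ rules out components of $D$ meeting $C\setminus\partial\gamma$; feeding these into the flux identities excludes any nontrivial $D$ and yields the required local upper bounds.

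With these bounds, the Compactness Theorem of Section \ref{compactness-theo} together with Remark \ref{height} yields $C^2_{\mathrm{loc}}$ convergence of $(u_n)$ on $\Omega$ to a solution $u$ of \eqref{minimal}. Continuous attainment of $\varphi$ on $C\setminus\partial\gamma$ follows from a sandwich argument with the upper and lower barriers of Section \ref{barrier} at each convex point of $C\setminus\partial\gamma$, applicable since $\varphi$ is continuous there: the barriers furnish $\limsup_{p\to p_0}u(p)\leq\varphi(p_0)\leq\liminf_{p\to p_0}u(p)$. On the interior of $\gamma$, the inequality $u\geq u_n\equiv n$ immediately gives $u\to+\infty$. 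Uniqueness is obtained by comparing any competing solution $\tilde u$ with the approximating sequence: since $u_n\leq \tilde u$ on $\partial\Omega\setminus\partial\gamma$ (using $\tilde u=\varphi=u_n$ on $C$ and the divergence of $\tilde u$ on $\gamma$ to dominate $n$ near $\gamma$), the maximum principle gives $u_n\leq \tilde u$, hence $u\leq \tilde u$; the reverse inequality is obtained by a parallel comparison exploiting that $u=\tilde u=\varphi$ on $C$.

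The main obstacle is the uniform boundedness step. In $\R^3$, Jenkins and Serrin exploit the explicit form of minimal graphs with $+\infty$ data on a line, namely inclined half-planes, to pin down the structure of $\partial D\cap\Omega$. In $Nil_3(\tau)$ such canonical extremal graphs are not at hand, so the structural analysis of $D$ must be carried out intrinsically, relying only on the boundedness and divergence-freeness of $X_u$ together with the convexity/length hypothesis on $\Omega$. Controlling the flux identities and the limiting behavior of $X_n$ near a putative interior component of $\partial D$ in the sub-Riemannian ambient is the main technical hurdle.
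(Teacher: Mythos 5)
Your overall architecture (monotone exhaustion $\varphi_n=\varphi$ on $C$, $\varphi_n=n$ on ${\rm int}(\gamma)$, Theorem \ref{existence} for each $u_n$, compactness via Remark \ref{height}, barriers of Section \ref{barrier} on $C\setminus\partial C$, Jenkins--Serrin-type uniqueness) coincides with the paper's. But the decisive step --- the locally uniform upper bound for $(u_n)$ --- is exactly where you depart from the paper and where your argument is not actually carried out. You propose to run the Jenkins--Serrin flux analysis of the divergence set $D$ in $Nil_3(\tau)$, and you yourself concede that the structure theorem for $\partial D\cap\Omega$ (interior boundary consisting of straight segments with limiting flux $\pm1$) is ``the main technical hurdle'' in the sub-Riemannian setting. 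As written this is a genuine gap: nothing in the proposal establishes that the Euclidean structure theory of divergence lines transfers to equation \eqref{minimal}, whose coefficients depend explicitly on $(x_1,x_2)$ through the terms $\tau x_2+u_1$ and $-\tau x_1+u_2$, so the flux of $X_n$ across a segment is not controlled by the same exhaustion/length identities without further work. The inequality $|\gamma|<|C|$ is true by convexity, but you never get to use it because the machinery it feeds into is not available.

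The paper avoids this entirely with a geometric barrier: the horizontal catenoids ${\mathcal C}_t$ of Daniel--Hauswirth (Section \ref{hor-catenoid}), whose projection is $\{|x_1|\leq t\cosh(x_2/t)\}$. Given a compact $K\subset{\rm int}(\Omega)$ at distance $d_K$ from $\gamma$, one chooses $t$ so large that $t\cosh(B/t)-t<d_K/2$ (where $\Omega\subset\{|x_2|<B/2\}$) and positions $\Omega$ so that $x_1(\gamma)=t\cosh(B/t)$; then $K\subset{\rm int}\big(\pi({\mathcal C}_t)\big)$ while $\gamma$ lies outside $\pi({\mathcal C}_t)$. Translating ${\mathcal C}_t$ vertically above the graph of $\varphi$ on $\Omega\cap\pi({\mathcal C}_t)$, the boundary of the graph of every $u_n$ over $\Omega\cap\pi({\mathcal C}_t)$ lies below the translated catenoid (the part over $C$ because $\varphi$ is bounded, the part over $\Omega\cap\partial\pi({\mathcal C}_t)$ because the catenoid closes up vertically there), so the maximum principle gives a bound on $K$ independent of $n$. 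This is the same catenoid-sliding idea used in Theorem \ref{non-existence}, and it is the ingredient your proposal is missing. (Two smaller points: the monotonicity $u_n\leq u_{n+1}$ and the uniqueness both require the general maximum principle for piecewise continuous/infinite data, which the paper takes from Pinheiro's adaptation of Jenkins--Serrin to $Nil_3$; your uniqueness sketch also needs care in asserting $u_n\leq\tilde u$ near $\gamma$, since the comparison must be made on subdomains slightly inside $\Omega$ where $\tilde u$ already exceeds $n$.)
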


As a  Corollary of  Theorem \ref{Scherk-gen} we get the following existence 
result.
Let $T\subset \R^2$  be a triangle with sides $\alpha,$ $\beta,$ $\gamma.$ Let  
$A, B, C$ the vertices of $T$ 
labeled such that  they are opposite to
 $\alpha, \beta, \gamma$ respectively.

\begin{coro}\label{Scherk}
 Let $\varphi : \alpha \cup \beta \rightarrow \R$ be a continuous function.
Then, $\varphi$ has a unique minimal extension  over $T$ assuming the value $+\infty$
on the interior of $\gamma$.

 More precisely, there exists a unique continuous function
 $u : T\setminus \gamma \rightarrow \R$ such that:
\begin{itemize}
 \item $u$ is $C^2$ on the interior of $T$ and verifies the minimal surface
equation $(\ref{minimal})$,

\item $u(p_n) \to + \infty$ for any sequence $(p_n)$ of
{\rm int}$(T)$ converging to an interior point of $\gamma$.

\item $u(p)=\varphi(p)$ for any $p\in \alpha \cup \beta$, $p\not= A, B$.
\end{itemize}
\end{coro}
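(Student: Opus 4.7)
The plan is to derive Corollary \ref{Scherk} as an immediate specialization of Theorem \ref{Scherk-gen}, taking the bounded convex domain to be the triangle $T$. With the notation of that theorem, I would set $\gamma$ (the segment on $\partial \Omega$) to be the side of $T$ opposite the vertex $C$, i.e.\ the same $\gamma$ as in the corollary, and I would set the convex arc to be $\alpha \cup \beta$, the broken line formed by the two remaining sides meeting at the vertex $C$. Since $T$ is convex, every point of $\alpha \cup \beta$ admits a supporting line for $T$, including the corner at $C$; the hypotheses of Theorem \ref{Scherk-gen} do not require the arc to be $C^1$, so a vertex is allowed. The continuous data $\varphi : \alpha\cup\beta \to \R$ then plays exactly the role of the function denoted $\varphi$ in the theorem.

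Feeding this into Theorem \ref{Scherk-gen}, I obtain a unique continuous function $u : T\setminus\gamma \to \R$ which is $C^2$ in the interior of $T$, satisfies the minimal surface equation \eqref{minimal}, blows up to $+\infty$ at every interior point of $\gamma$, and agrees with $\varphi$ on $\alpha\cup\beta$ away from the endpoints of the arc. Those endpoints are precisely $A$ and $B$ (the vertices where $\alpha\cup\beta$ meets $\gamma$), matching the formulation in the corollary. Uniqueness likewise transfers from the theorem, so there is nothing else to verify.

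The reduction is thus purely formal; the substantial work lies in Theorem \ref{Scherk-gen} itself. The way I would prove that theorem is by the classical Jenkins--Serrin/Scherk scheme adapted to $Nil_3(\tau)$: construct a monotone sequence of bounded minimal extensions $u_n : \Omega \to \R$ using Theorem \ref{existence} with boundary value $\varphi$ on $C$ and the constant $n$ on $\gamma$; use the upper barriers built in Section \ref{barrier} at each convex point of $C$ to get local uniform height estimates on compact subsets of $\Omega$ away from $\gamma$; apply the Compactness Theorem of Section \ref{compactness-theo} (via Remark \ref{height}) to pass to a $C^2_{\mathrm{loc}}$-limit $u$; and verify blow-up on the interior of $\gamma$ by bounding $u_n$ from below by translates of a suitable sub-solution, such as a half of a Daniel--Hauswirth horizontal catenoid from Section \ref{hor-catenoid} positioned so that its projection grazes $\gamma$ from outside $\Omega$. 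Uniqueness then follows from the interior maximum principle applied to the difference of two extensions, both of which have the same data on $C$ and blow up on $\gamma$. I expect the blow-up step to be the main obstacle, since it requires an ambient-specific sub-solution; once that is in place, the rest of the argument is standard.
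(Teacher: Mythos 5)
Your reduction of the corollary to Theorem~\ref{Scherk-gen} with $\Omega=T$, the convex arc $C=\alpha\cup\beta$ and the same side $\gamma$, is exactly what the paper does (it offers no separate proof beyond this specialization), so the proposal is correct and matches the paper's approach. One caveat on your supplementary sketch of Theorem~\ref{Scherk-gen} itself: the paper obtains the blow-up on $\gamma$ from the monotonicity of the sequence $(u_n)$ (via the general maximum principle of Jenkins--Serrin as adapted by Pinheiro) rather than from a catenoidal sub-solution, and it derives uniqueness not from a naive interior maximum principle applied to the difference of two extensions --- which is delicate when both functions are unbounded near $\gamma$ --- but from the Jenkins--Serrin uniqueness theory for infinite boundary data transplanted to $Nil_3$ by Pinheiro.
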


\

{\em proof of Theorem  \ref{Scherk-gen}.}  
  We do the proof for $\tau=\frac{1}{2}$. It will be clear that the 
proof  is analogous for any $\tau>0$ (by Remark \ref{gen-catenoids}).

By Remark \ref{translations}(B), 
it is enough to prove the result for $\psi (\Omega)$ where $\psi$ is any
Euclidean isometry of $\R^2$ and for any continuous function on
$\psi (C)$.

\medskip
  It follows that we can assume
that the segment $\gamma$ is orthogonal to the $x_1$ axis, with:
$0< x_1 (C) < x_1 (\gamma)$.

\smallskip

Let $n\in \n$ and let $\varphi_n$ be the piecewise continuous 
function on $\partial\Omega$
with value $\varphi$ on $C$ and $n$ on ${\rm int}(\gamma)$. 
Theorem \ref{existence} insures the existence and uniqueness of a minimal
extension $u_n$ of $\varphi_n$ to $\Omega$. Namely, for any $n\in \n$:
\begin{itemize}
 \item $u_n$ is continuous on $\Omega\setminus\partial C$,

\item $u_n$ is $C^2$ on int$(\Omega)$ and satisfies the  minimal surface
equation $(\ref{minimal})$,

\item $u_{n}(p) =\varphi_n(p)$ for any $p\in C \setminus \partial C$.
\end{itemize}

\smallskip

\noindent {\bf Claim 1.} {\em There is a subsequence of $(u_n)$
converging to a $C^2$ function $u$ on int$(\Omega),$ satisfying the minimal 
surface equation.}

By Remark \ref{height}, it is enough
to prove that for any compact subset $K$ of int$(\Omega)$, 
there exist uniform height estimates for  the sequence $(u_n).$

We show uniform height estimates  for $(u_n)$ from above  on $K$. A similar reasoning   shows that
uniform height estimates from below hold as well.

\smallskip

Let $d_K>0$ be the Euclidean distance between $K$ and $\gamma$, and let
$B$ be a positive constant such that
$\Omega\subset \{ (x_1,x_2),\ \vert x_2\vert < B/2\}$.

As in the proof of Theorem \ref{non-existence}, we consider the family of
horizontal catenoids ${\mathcal C}_{t}$ of $Nil_3$, $t\in\R_+$.
Recall that the projection of  ${\mathcal C}_t,$  $t>0,$ on the
$x_1$-$x_2$ plane is given by:
\begin{equation*}
 \pi({\mathcal C}_{t})=
 \left\{(x_1,x_2)\in\r^2,  \ |x_1|\leq t\cosh
\left(\frac{x_2}{t}\right)\right\}.
\end{equation*}
We set $L^+_B (t)=\partial\pi({\mathcal C}_{t}) \cap 
\{(x_1,x_2)\in \R^2, \ x_1 >0\,, \vert x_2 \vert \leq B\}$.
Then:
\begin{align*}
 L^+_B (t) \cap \{(x_1,B),\ x_1 \in \R\} &=
\{ (t\cosh \left(\frac{B}{t}\right), B)\} \\
 L^+_B (t) \cap \{(x_1,0),\ x_1 \in \R\} &=
 \{(t,0)\}.
\end{align*}

Then we have
\begin{equation*}
  L^+_B (t) \subset \{(x_1,x_2)\in \R^2,\ t\leq x_1 \leq t\cosh
\left(\frac{B}{t}\right),\, \vert x_2 \vert \leq B\}.
\end{equation*}
Let $t >0$ large enough to have
\begin{equation}\label{bloque}
 t\cosh \left(\frac{B}{t}\right) - t< d_K/2.
\end{equation}
Now, let us translate the domain $\Omega$ along the $x_1$ axis so that
$x_1 (\gamma)=t\cosh \left(\frac{B}{t}\right)$, recall that the side $\gamma$
is orthogonal to the $x_1$ axis. Because of inequality (\ref{bloque}) we have
that $K\subset {\rm int}\big(\pi({\mathcal C}_{t})\big)$ (see Figure \ref{picture4}).

\begin{figure}[h]
 \centerline{\includegraphics[scale =0.4]{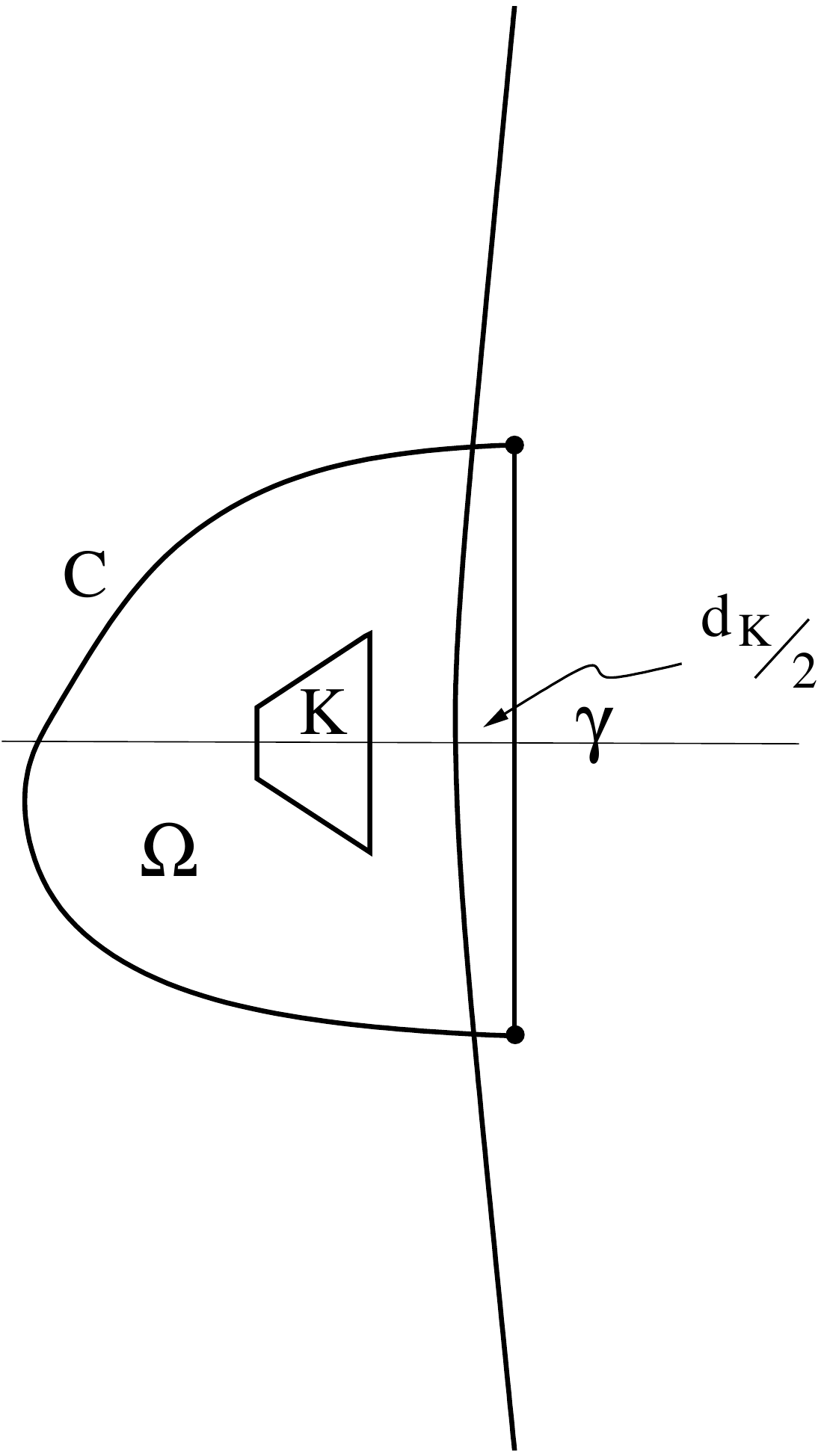}}
 \caption[]{ The subset $K$ }
 \label{picture4}
 \end{figure}

Translate vertically the catenoid $\mathcal C_{t}$ so that it stays above the
graph of $\varphi$ on $\Omega\cap  \pi({\mathcal C}_{t})$. 
Then, as the boundary of  the graph of any of the functions $u_n$ over
$\Omega\cap  \pi({\mathcal C}_{t})$ stays below the translated catenoid,
 by the maximum principle the graph of any of the  functions $u_n$ on
$\Omega\cap  \pi({\mathcal C}_{t})$ remains below this catenoid.  
This gives uniform upper  estimates
for the sequence $(u_n)$ on $K.$

\medskip

\noindent {\bf Claim 2.} {\em The sequence $(u_n)$ is strictly increasing on
int$(\Omega)$. Consequently,  $u(p_n) \to + \infty$ for any sequence $(p_n)$ of
{\rm int}$(\Omega)$ converging to an interior point of $\gamma$.}

\smallskip

The first assertion is a consequence of the {\em general maximum principle}
of H. Jenkins and
J. Serrin \cite{JS}, adapted to the metric of $Nil_3$ by
A. L. Pinheiro \cite[Theorem 2.1]{P},  because the boundary data  
of the sequence $(u_n)$  are not decreasing in $n.$    
The second assertion follows
easily. 
Uniqueness follows by  the generalization of Jenkins-Serrin result 
\cite[Section 6] {JS}  to $Nil_3$ by A. L. Pinheiro \cite[Theorem 4.2]{P}.

\medskip

Finally,  one can
use the barrier  constructed in Section \ref{barrier} in the same way as in
\cite[Theorem 3.4]{ST1}, to prove that the
function $u$ extends continuously up to
$\partial \Omega\setminus \gamma$ taking value  $u=\varphi$ on
$C\setminus \partial C$.
\qed

\begin{remark}
  Theorem \ref{Scherk-gen} also holds for functions
 $\varphi: \alpha \cup \beta \rightarrow \R$ continuous except at a finite number
 of points, where $\varphi$ has left and right limits.
\end{remark}

\subsection{Existence on Unbounded Domains}
\label{existence-unbounded-section}

 In next theorem we prove that we can solve the Dirichlet
 problem for the minimal surface equation on any convex 
 unbounded domain, different from the half-plane,
 with arbitrary piecewise continuous   boundary data,
 and on a half-plane for bounded piecewise continuous boundary data.

\begin{theo}\label{existence-unbounded}
$  $
\begin{enumerate}
\item[]{\rm{(A)}} Let $\Omega\subset\r^2$   be an unbounded convex domain
different from a half-plane.  Let $\varphi$  be a
function  on $\Gamma=\partial\Omega$ continous
 except at a  discrete set $Z \subset \partial \Omega$ of points where
$\varphi$ has left and right limits.

 Then there exists a
minimal extension of $\varphi$ over $\bar\Omega.$

\item[]{\rm{(B)}} Let $\Omega$ be a half-plane and let $\Gamma$ the
straight line that is the boundary of $\Omega.$  Let $\varphi$  be a  
bounded function  on $\Gamma$  continuous except
at a discrete set  of points where $\varphi$ has left and right limits.
 Then there exists a
1-parameter family of minimal extension of $\varphi$ over $\bar\Omega.$

\end{enumerate}

 In both cases, the boundary of the minimal extension is the union of 
 the graph of $\varphi$ with the vertical segment between
the left and the right limit of $\varphi$ at the discontinuity points.
\end{theo}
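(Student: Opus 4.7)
The plan is to exhaust $\Omega$ by an increasing sequence of bounded convex subdomains $\Omega_n \Subset \Omega_{n+1}$ with $\bigcup_n \Omega_n = \Omega$, apply Theorem~\ref{existence} on each $\Omega_n$ to produce a minimal extension $u_n$ taking the value $\varphi$ on $\partial \Omega \cap \partial \Omega_n$ and some prescribed piecewise continuous value on the auxiliary arc $\partial \Omega_n \setminus \partial \Omega$, and then pass to the limit using the Compactness Theorem of Section~\ref{compactness-theo}. By Remark~\ref{height}, it suffices to produce a uniform height bound for $(u_n)$ on each compact $K \Subset \Omega$; a subsequence then converges in $C^2_{\mathrm{loc}}$ to a solution $u$ of \eqref{minimal}.

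The main analytic step is securing these height bounds. In case (A), since $\overline{\Omega}$ is convex and is not a half-plane, it lies between two distinct supporting lines and is therefore contained in a Euclidean strip or a wedge of opening less than $\pi$. Given $K \Subset \Omega$, I choose a horizontal catenoid $\mathcal{C}^{\tau}_\alpha$ (Remark~\ref{gen-catenoids}), rotated so that its distinguished axis is parallel to the confining strip or wedge, and translated so that $K \subset \pi(\mathcal{C}^{\tau}_\alpha)$ while the two branches of $\partial \pi(\mathcal{C}^{\tau}_\alpha)$ meet $\partial \Omega$ only on a compact set where $\varphi$ is bounded. A sufficiently large vertical translation of $\mathcal{C}^{\tau}_\alpha$ upward then sits above all boundary values of $u_n$ on $\Omega_n \cap \pi(\mathcal{C}^{\tau}_\alpha)$, and the Maximum Principle yields a uniform upper bound for $(u_n)$ on $K$, exactly as in Claim~1 of the proof of Theorem~\ref{Scherk-gen}; a symmetric downward translation gives the lower bound. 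In case (B), the boundedness of $\varphi$ makes this step direct: a single vertically translated catenoid dominates (resp.\ is dominated by) the whole sequence on any given $K$.

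Once the limit $u$ is produced on $\Omega$, one must verify that $u$ attains $\varphi$ continuously at each point $p_0 \in \partial \Omega \setminus Z$ and that the closure of the graph of $u$ at a discontinuity point $p_0 \in Z$ coincides with the vertical segment joining the one-sided limits of $\varphi$. Every point of $\partial \Omega$ is a convex point of the convex domain $\Omega$ in the sense of Section~\ref{barrier}, so the Plateau-based upper and lower barriers $\omega^\pm_k$ built there are available everywhere; sandwiching $u$ between them forces $u(p_0) = \varphi(p_0)$ at continuity points. At a point of $Z$, a Scherk-type barrier borrowed from Theorem~\ref{Scherk-gen}, together with Pinheiro's adaptation of the Jenkins--Serrin generalized maximum principle, identifies the asymptotic limit set with the prescribed vertical segment.

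The delicate point is the 1-parameter family in (B): I expect to produce it by letting the value prescribed on the auxiliary arc $\partial \Omega_n \setminus \partial \Omega$ grow linearly with $n$, controlled by a real parameter $a$, so that the limiting graph $u^{(a)}$ acquires a nontrivial asymptotic slope in the direction normal to $\Gamma$. The uniform height estimates still apply because $\varphi$ is bounded, so $u^{(a)}$ exists for every $a$. The genuinely hard part is showing that distinct values of $a$ yield distinct extensions; I anticipate proving this by comparing the asymptotic behaviour of $u^{(a)}$ far from $\Gamma$ with that of the tilted minimal planes $u = a x_1 + b x_2 + c$ and the FMP examples~\eqref{FMP-eq}, whose explicit form already demonstrates non-uniqueness of minimal extensions over a half-plane in the Heisenberg space.
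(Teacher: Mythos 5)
Your skeleton for part (A) is the paper's: exhaust $\Omega$ by bounded convex pieces, solve via Theorem~\ref{existence}, get uniform height estimates from horizontal catenoids whose projections cap off the confining wedge or strip, pass to the limit by Remark~\ref{height}, and recover the boundary values with the Plateau-based barriers of Section~\ref{barrier}. That part is sound. (At a discontinuity point the paper does not use a Scherk barrier: it shows the open vertical segment lies in the closure of the graph by an intermediate-value argument on small arcs around $p$, and excludes everything above and below with the \emph{finite} triangle barriers of Section~\ref{barrier}; your appeal to Theorem~\ref{Scherk-gen} and the generalized maximum principle is not what is needed there, though the right tools are at hand.)

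The genuine gap is in part (B). A horizontal catenoid cannot furnish height estimates over a half-plane: the projection $\pi(\mathcal C_\alpha)=\{|x_1|\le\alpha\cosh(x_2/\alpha)\}$ is unbounded in the axis direction and widens super-linearly in the transverse direction, so however you rotate and translate it, $\pi(\mathcal C_\alpha)\cap\Omega_n$ always reaches the auxiliary arc $\partial\Omega_n\setminus\partial\Omega$ (and any truncation $|x_2|\le B$ leaves truncation edges inside the half-plane through which the comparison leaks). This is exactly why the half-plane is excluded from part (A). Worse, your own construction of the $1$-parameter family prescribes data growing like $an$ on the auxiliary arc, so the boundary values of $u_n$ are not uniformly bounded and the sentence ``the uniform height estimates still apply because $\varphi$ is bounded'' is a non sequitur. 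The missing idea, which is the heart of the paper's proof of (B), is to use the tilted minimal planes $x_3=ax_2+b$ (with $a>0$, $b>\sup\varphi$) as global supersolutions: taking $\Omega_n=\{d<x_2<n\}$ with data $\varphi$ on $\{x_2=d\}$ and $an+b$ on $\{x_2=n\}$, the maximum principle applied to vertical translates of this entire minimal graph gives $u_n\le ax_2+b$ everywhere, and horizontal planes below $\inf\varphi$ give the lower bound; the $1$-parameter family then comes from varying the slope $a$. (The paper is itself terse about why distinct slopes give distinct limits, so your plan to separate them by their asymptotic growth is a reasonable supplement, but without the plane barrier you have no construction to which it applies.)
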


\begin{proof}
 (A) As $\Omega$ is convex and is not a half-plane,  it is either
contained   in  a wedge  or it is a strip, (in this case it has
two boundary components.)

\

\noindent {\bf Case 1.} {\em $\Omega$ is contained in a wedge.}

Assume that  the wedge  is contained in the half plane  $x_1>0.$ Let $B_n$
be the ball of radius $n,$ centered at the origin in the  $x_1$-$x_2$ plane  and let
$\Omega_n= B_n\cap \Omega.$ Denote by $\Gamma_n$ the boundary of $\Omega_n$
contained in $\partial B_n.$
Let $r_n,$ $s_n$ be the intersection points between $\Gamma_n$ and $\partial\Omega.$
Since $Z$ is discrete, we can assume that $\varphi$ is continuous at $r_n$ and $s_n.$
On  the boundary of $\Omega_n,$ we define a piecewise continuous function
$\varphi_n,$ continuous on $\Gamma_n,$
with value between $\varphi(r_n)$ and $\varphi(s_n)$
 on $\Gamma_n$ such that
\begin{equation*}
 \varphi_n(q)=\left\{\begin{array}{l}
 \varphi(q) \ \ {\rm if } \ \ q\in \partial\Omega_n\setminus\Gamma_n\\
 \varphi(r_n)  \ \ {\rm if } \ \ q=r_n \\
 \varphi(s_n)  \ \ {\rm if } \ \ q=s_n\\
 \end{array}
 \right.
\end{equation*}

As $\Omega_n$ is bounded and convex  and $\varphi_n$ is piecewise continuous,
Theorem \ref{existence} guarantees the existence of a minimal extension
$u_n$ of $\varphi_n$ on $\Omega_n.$ We recall that   the boundary of the graph
of each $u_n$
contains the vertical segment
above any  discontinuity point with endpoints the left and the right limit of
$\varphi$ at the point.
Moreover, there are no other points of the closure of the graph of $u_n$ on the
vertical geodesic
passing through the discontinuity points. 
Our  proof will be inspired in 
the case of $\h^2\times\r$ \cite{ST}.

We want to prove that there is a subsequence of $(u_n)$ converging to a
minimal solution $u$ on
every compact subset  of $\Omega$ and such that
\begin{itemize}
\item $u$ extends continuously up  to  $\ov \Omega\setminus Z$,

\item the boundary of the graph of $u$ consists in the graph of $\varphi$ over
$\partial \Omega \setminus Z$ and the vertical segments between the right limit
and the left limit of $\varphi$ at any point $p\in Z$.
\end{itemize}

By  Remark  \ref{height}, in order to prove the convergence, we only need to
prove that
uniform height estimates hold  for the sequence   $(u_n)$ on every compact
subset of $\Omega.$

Let $K$ be a compact subset  of $\Omega$ and
let  $n_0$  be such that $K\subset\Omega_{n_0}.$ Consider a horizontal
catenoid ${\mathcal C}_{\alpha(n_0)}$ such that 
$\Omega_{n_0}\subset \pi( \mathcal{C}_{\alpha(n_0)})$.
Moreover let $\wt  {\mathcal C}_{\alpha(n_0)}={\mathcal C}_{\alpha(n_0)}\cap
\{(x_1,x_2,x_3)\in\r^3  ,  \ \ |x_2|\leq B\},$
where $B$ is chosen such that $\pi( \wt  {\mathcal C}_{\alpha(n_0)})$  strictly
contains $K.$
Now, let   $N_0> n_0$ (depending only on $n_0$) such that
$\pi( \wt  {\mathcal C}_{\alpha(n_0)})\cap\Gamma_{N_0}=\emptyset.$

As, $\bar\Omega_{N_0}$ is compact,
$\varphi_{N_0}$ is bounded there, so that it is possible to translate
vertically upward  $ \wt  {\mathcal C}_{\alpha(n_0)}$ such that it is above
$\varphi_{N_0}(\partial \Omega_{N_0} \cap \partial \Omega)   $.
Notice that, for any $n\geq N_0,$    
the graph of ${u_n}_{|\partial\Omega_{N_0}}$ is below the translation  of
$ \wt  {\mathcal C}_{\alpha(n_0)}.$  This means that,  for any  $n\geq N_0,$ 
the
boundary of the graph of ${u_n}_{|\Omega_{N_0}}$stays below the
translation of $ \wt  {\mathcal C}_{\alpha(n_0)}$ and  hence, by the
maximum principle, all the graph of $u_n$ stays below  it (see figure \ref{picture5}).
This gives the uniform   height estimates from above on $K$ for every $u_n$, 
$,n\geq N_0$.

\begin{figure}[h]
 \centerline{\includegraphics[scale =0.4]{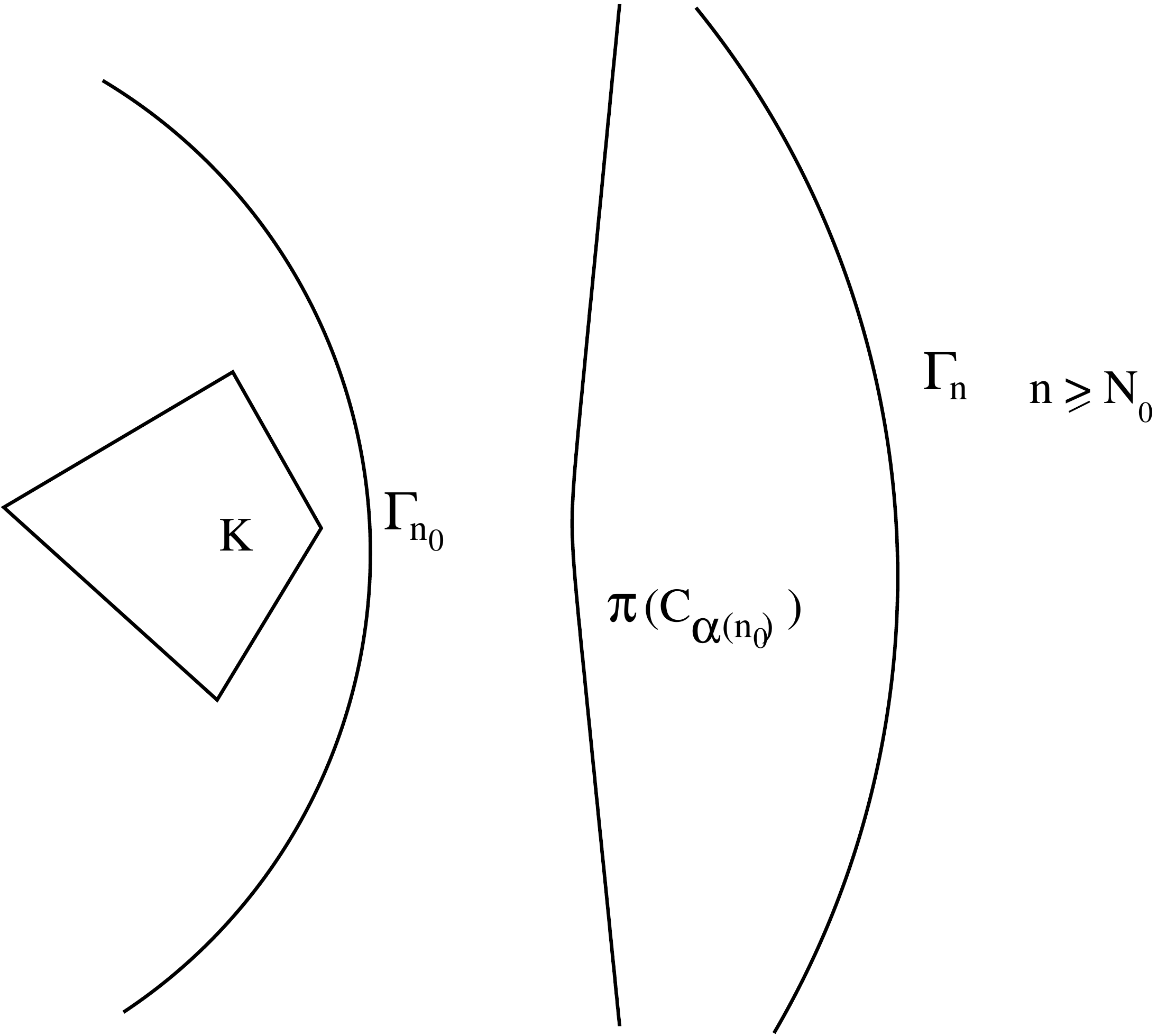}}
 \caption[]{Uniform bounds on $K:$ the graph of $\varphi_{N_0}$ over $\partial \Omega_{N_0}\cap\partial\Omega$ is below the vertical translation of  ${\mathcal C}_{\alpha(n_0)}.$}
 \label{picture5}
 \end{figure}

\smallskip

It is clear that  one finds analogously   height estimates from below 
for every $u_n$ on $K,$ $n\geq N_0.$

\smallskip

Now we prove that the function $u,$ previously defined as the limit of 
a subsequence of $(u_n),$ takes the desired boundary values.

 At any  point of $\Gamma\setminus Z$, that is  where $\varphi$ is
continuous, one can
use the barrier that we constructed before in the same way as in
\cite[Theorem 3.4]{ST1}, to prove that $u$ extends continuously up to
$\ov \Omega \setminus Z$, by  setting $u=\varphi$ on $\Gamma\setminus Z$.

 \smallskip

 Now,  we show what happens at a discontinuity point. As we noticed before, 
 the proof is analogous to the one in
 \cite[Corollary 4.1, page 325]{ST}.
  Let $p$ be a discontinuity point of $\varphi.$
Let $\varphi_1(p)$ and $\varphi_2(p)$   be the left and the
 right limit of $\varphi$ at $p.$ We assume that
 $\varphi_1(p)<\varphi_2(p).$ We   first prove that the vertical 
segment between
 $(p,\varphi_1(p))$ and $(p,\varphi_2(p))$ is contained in the boundary of the
graph of   the function $u.$ Let $l$ be such that
$\varphi_1(p)< l< \varphi_2(p).$

Let  $p_n,$ $q_n,$ distinct  points of $\Gamma$  at distance $\frac{1}{n}$ from
$p$,  such that $p_n$ and $q_n$ are not in the same component of
$\Gamma\setminus \{p\}$.
One has that, for $n$ large enough:
\begin{equation*}
u(p_n)=u_n(p_n)=\varphi(p_n)<l<\varphi(q_n)=u_n(q_n)=u(q_n).
\end{equation*}

Let $\delta_n$ be   a small arc contained in the intersection of $\Omega$
with  an  Euclidean disk  centered at $p$ of radius
$\frac{2}{n},$  with endpoints $p_n$ and $q_n$.
By the previous inequality and by the continuity of $u,$ there exists a point
$r_n\in\delta_n$   such that
$u(r_n)=l.$ This means that, for $n$ great enough, any point $(r_n,l)$ 
belongs to the graph of $u.$
Now,   when   $n\longrightarrow\infty,$ one has that $(p,l)$ belongs 
to the closure of the graph of $u.$

\smallskip

Finally, we have to prove that there  are no other points of
the closure of the graph of $u$ on the  vertical geodesic above $p$.

Let   $h$ be a real number such that
$h > {\rm max}\{\varphi_1(p), \varphi_2(p)\}$.  Then, for any $\varepsilon >0$ such that
$h-\varepsilon>{\rm max}\{\varphi_1(p), \varphi_2(p)\}$,
we construct a standard upper barrier on
a triangle
$T$ with sides $\alpha,$ $\beta,$ $\gamma.$  We choose the triangle such that
the side $\gamma$ touches $\Gamma$ at the point
$p$. We choose the  value $h-\varepsilon$ on $\gamma$ and the value $M$
on $\alpha$ and $\beta$, such that $M>\sup_{q\in
T\cap\Omega, n\in {\mathbb N}} u_n(q)$, (observe that by
construction, the functions $u_n$ are uniformly bounded on any compact subset
of $\ov \Omega$).  We denote  by $v$ the function on $T$ given by this upper
barrier.

Then, by the maximum principle, we get that $u_n(q) \leq v(q)$ for any
$q\in T\cap \ov \Omega$, $q\not= p$. Consequently we have
$u(q) \leq v(q)$ for any
$q\in T\cap \ov \Omega$, $q\not= p$. Since $v_{\vert \gamma}=h-\varepsilon$, we
obtain that the point $(p,h)$ is not in the closure of the graph of $u$.

\smallskip

We can show in the same way that any point $(p,h)$ with
$h < \min \{\varphi_1(p), \varphi_2(p)\}$ is not in the closure of the graph of
$u$.

\

\noindent {\bf Case 2.} {\em $\Omega$ is  a strip.}

 The proof is analogous  to the proof of the Case 1.
  We only  describe what are
$\Omega_n$ and $\varphi_n$ in this case.

We assume the strip is $\Omega= \{(x_1,x_2)\in\r^2 ,  \ 0<x_2<d\}$. For each
$n\in \n^*$, let $\Omega_n$ be the following rectangle:
\begin{equation}
\Omega_n=\{(x_1,x_2)\in\r^2,  \ |x_1|<n,\  0<x_2<d\}.
\end{equation}

 We choose    $\varphi_n:\partial\Omega_n\longrightarrow\R$ 
to be a 
piecewise continuous function such that
$\varphi_n(q)= \varphi(q)$
if  $q\in \partial\Omega\cap \partial \Omega_n$ and  such that it 
is monotone on  each
vertical side of $\partial\Omega_n.$

\

(B)  We can assume that the half-plane is 
$\Omega=\{(x_1,x_2)\in\r^2,  \ x_2>d\},$ $d>0.$ Let us
consider a plane $x_3=ax_2+b,$ where $a>0,$ $b>\sup_{x_2=d}\varphi.$
 For any
$n\in\n,$ we consider the
strip $\Omega_n=\{(x_1,x_2)\in\r^2,  \  d<x_2<n\}$ and the 
piecewise 
continuous function
$\varphi_n:\partial\Omega_n\longrightarrow \r$ defined as follows
\begin{equation*}
 \varphi_n(p) =
\begin{cases}
    \varphi(p) &\text{if} \ p=(x_1,d)\\
    an+b  &\text{if} \  p=(x_1,n)
\end{cases}
\end{equation*}

We solve the Dirichlet problem on $\Omega_n$  with boundary values equal to
$\varphi_n$ as in (A) and
denote by  $u_n$ the  solution.
Recall that each $u_n$ is obtained as  uniform limit on compact subsets of  the
strip, of
functions $(u_{n,k})_{k\in\n}$ defined on  rectangles
${\mathcal R}_{n,k}$ exhausting the strip $\Omega_n.$ Moreover, by the maximum
principle,
comparing the graphs of $u_{n,k}$ with vertical translation of  the plane
$x_3=ax_2+b,$  one has that 
$u_{n,k}(x_1,x_2)\leq ax_2+b,$ for all 
$(x_1,x_2)\in {\mathcal R}_{n,k},$
for all $n,$ $k.$ Therefore the graph of $u_n$ is below the plane $x_3=ax_2+b$
for every $n.$
This gives the desired estimate from above for the sequence $(u_n).$
We do the same with planes  staying below the boundary values and we find
uniform estimates from below, as well.

 As the solution that we find is contained between two planes, the existence
 of one parameter family of solutions
 is easily achieved by changing the slope of  the initial plane that
 one uses as supersolution.
\end{proof}

\begin{remark}
\label{remark-existence}
\

\begin{enumerate}
\item[]{{\rm (A)}} The existence of solutions on a half-plane can be proved in a more
general case.
Assume that the half-plane is $x_2>0$ and let
 the boundary value $\varphi$  be piecewise continuous  and
such that $\varphi(x_1,0)=cx_1$ for $|x_1|>n.$ The proof is analogous to 
the proof of Theorem \ref{existence-unbounded}(B),
using suitable tilted planes that do not
contain the $x_1$-direction.

\item[]{{\rm (B)}} In the proof of 
Theorem \ref{existence-unbounded}(A),
Case 1, we can use, as  barriers, the surfaces constructed 
in Theorem \ref{Scherk}.

\item[]{{\rm (C)}} The proof of Theorem \ref{existence-unbounded}(A)
  yields that, when  the boundary value  $\varphi$  is bounded above 
  (respectively  below)  by a constant $A,$ then the solution  given by
  our proof  is also bounded above (respectively below) by the same constant $A.$

  \item[]{{\rm (D)}}  The proof of Theorem \ref{existence-unbounded}(B)
  yields that, when the boundary value $\varphi$ is non negative, 
  then the solution  given by
  our proof  is non negative as well. Moreover, such solution has linear 
  growth  (see Definition
\ref{growth-def}).

On the contrary, it will be clear  from examples below that there are 
many unbounded solutions on a  wedge, with zero boundary value.
The existence of such surfaces means that, on a wedge, the boundedness 
of the boundary value does not imply the boundedness of the 
  extension.
 \smallskip
 Two questions arises about solutions on a strip.
 \begin{enumerate}
 \item Is the minimal solution with zero boundary 
 value on a strip, unique?
 \item Let $u$ be any minimal solution on a strip with boundary 
 value $\varphi$ such that $|\varphi|\leq M$ for some $M>0.$ Is $|u|\leq M?$
\end{enumerate}

Note that, by  \cite[Theorem 7]{MN}, any non trivial solution of  
the minimal surface equation, with zero boundary value on a strip has at least 
linear growth (see Definition \ref{growth-def}). In the same article Manzano and Nelli  prove that the growth of an entire minimal
graph in $Nil_3$ has order at most three (Theorem 6).
\end{enumerate}
\end{remark}

Let us recall the definition of growth of a graph.

\begin{definition}
\label{growth-def}

 Let $\Omega$ be an  unbounded subset of $\r^2$ and let
$\Omega_R$ be the intersection of $\Omega$ with the  ball 
of radius $R$ centered at the origin. Let $f:\r\longrightarrow \r_+$ be a  
continuous non decreasing function. The graph of  a continuous 
function $u:\Omega\longrightarrow \r$ has growth at least  
$f(R)$ (respectively at most $f(R)$) if
\begin{equation*}
\liminf_{R\longrightarrow \infty}\frac{\sup_{\partial \Omega_R}|u|}{f(R)}>0, \ \
{\rm (resp. \  \limsup_{R\longrightarrow \infty}\frac{\sup_{\partial \Omega_R}|u|}{f(R)}<\infty)}
\end{equation*}

In particular we say that  the growth is $\alpha>0$  if there 
exists a constant $c>0$ such that

\begin{equation*}
\lim_{R\longrightarrow \infty}\frac{\sup_{\partial \Omega_R}|u|}{R^{\alpha}}=c.
\end{equation*}

(linear  if $\alpha=1,$ and quadratic if $\alpha=2$)
\end{definition}

In the following we describe some examples of minimal graphs on 
unbounded domain with linear and quadratic growth.

\begin{enumerate}\label{R.wedge}
\item \label{FMP}  In  Section \ref{examples} we described the 
example  by \cite{FMP}, C. Figueroa, F.  Mercuri e R. Pedrosa . 
Let us consider 
$u_0:\R^2\longrightarrow\R$  defined by 
$u_0(x_1,x_2)=\tau x_1 x_2.$ The graph of $u_0$ is a complete minimal 
surface in 
$Nil_3(\tau)$ that is invariant by the isometry
$\varphi(x_1,x_2,x_3)=(x_1+c,x_2,x_3+c\tau x_3),$ $c\in\R.$ 
The function $u_0$ over the wedge 
$\{(x_1,x_2)\in\R^2 , \ x_1\geq 0, \ x_2\geq 0\}$ has 
zero boundary value and 
quadratic growth.

\item  In  \cite[Corollary 3.8]{C}, S. Cartier   proved that
there  exist non-zero minimal graphs on any wedge with 
angle less than $\pi,$ with zero
boundary value.  The growth of such graphs is
linear.
\end{enumerate}

In the following  theorem we show  a new example of minimal graph 
on a convex unbounded domain, containing a wedge of angle 
$\theta=\frac{\pi}{2},$ with non negative boundary value and at least  
quadratic growth.

\begin{theo}\label{P.wedge-gen}
Let  $\Omega$ be a convex, unbounded domain, different from the 
half-plane, containing a  wedge $W_{\theta}$   with vertex at the origin and 
angle
 $\theta\in \, [\frac{\pi}{2}, \pi[\,$,  and let $\varphi$ be  a 
 prescribed, non-negative, continuous data on $\partial \Omega.$ Then, there 
exists a minimal
 extension of $\varphi$ to $\Omega$  in $Nil_3(\tau),$ $\tau\not=0,$  
 with at least quadratic growth.
\end{theo}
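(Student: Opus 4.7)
My strategy is a Perron-type exhaustion in which the boundary data on the ``far'' portion of each exhausting subdomain is chosen to force quadratic growth, while the Scherk-type surfaces of Theorem \ref{Scherk-gen} supply the uniform $C^0$ bounds needed to pass to the limit. First I would use the rotation isometry $\varphi_4$ of $Nil_3(\tau)$ to arrange, after a preliminary rotation, that $W_\theta$ contains the standard quarter-plane $W_{\pi/2} = \{(x_1,x_2)\in\R^2 : x_1, x_2 \geq 0\}$, both having the origin as vertex. The decisive auxiliary object is the Figueroa--Mercuri--Pedrosa graph $u_0(x_1,x_2) = \tau x_1 x_2$ (the $a=0$ case of \eqref{FMP-eq}), which is a complete minimal graph on $W_{\pi/2}$ with zero boundary values and exact quadratic growth.

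\textbf{Approximating solutions and upper bound.} Next I would exhaust $\Omega$ by bounded convex subdomains $\Omega_n$ with $\partial\Omega_n = A_n \cup \Gamma_n$, where $A_n = \partial\Omega \cap \partial\Omega_n$ and $\Gamma_n$ escapes to infinity. The approximating data would be $\varphi_n := \varphi$ on $A_n$, $\varphi_n := u_0$ on $\Gamma_n \cap \overline{W_{\pi/2}}$, and $\varphi_n$ extended non-negatively and piecewise continuously elsewhere on $\Gamma_n$. Theorem \ref{existence} then produces a unique minimal extension $u_n$ of $\varphi_n$ on $\overline{\Omega_n}$; since $\varphi_n \geq 0$, the maximum principle forces $u_n \geq 0$. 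For the uniform upper bound on any compact $K \subset \Omega$, I would apply Theorem \ref{Scherk-gen} to build a Scherk-type graph $V_K$ over a bounded convex subdomain $D_K \subset \Omega$, $K \subset D_K$, whose boundary is one segment $\gamma_K$ interior to $\Omega$ (where $V_K = +\infty$) together with one convex arc $C_K \subset \partial\Omega$ (where $V_K = \varphi$). For $n$ large, $D_K \subset \Omega_n$ and $C_K \subset A_n$, so $V_K \geq u_n$ on $\partial D_K$ and therefore on $K$ by the maximum principle.

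\textbf{Lower bound, limit, and main obstacle.} For the lower bound I would compare $u_n$ and $u_0$ on the bounded domain $W_{\pi/2} \cap \Omega_n$: on $\partial W_{\pi/2} \cap \overline{\Omega_n}$ one has $u_n \geq 0 = u_0$; on $\Gamma_n \cap \overline{W_{\pi/2}}$ one has $u_n = \varphi_n = u_0$; and at any point of $A_n \cap \partial W_{\pi/2}$ (e.g.\ the vertex), $u_n = \varphi \geq 0 = u_0$. The maximum principle then yields $u_n \geq u_0$ on $W_{\pi/2} \cap \Omega_n$. By Remark \ref{height}, a subsequence $u_{n_k}$ converges in $C^2_{\mathrm{loc}}(\Omega)$ to a minimal solution $u$, which satisfies $u \geq u_0$ on $W_{\pi/2}$; the barriers of Section \ref{barrier} supply continuous attainment of $\varphi$ on $\partial\Omega$, and the inequality $u \geq u_0$ forces at least quadratic growth (since $u(R,R) \geq \tau R^2$ along the diagonal). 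The hard part will be the geometric construction of the Scherk upper barriers $V_K$: one has to produce, inside the convex domain $\Omega$, a bounded convex subdomain $D_K \supset K$ whose boundary splits into exactly one interior segment plus one convex arc contained in $\partial\Omega$. This is routine when $K$ lies near a smooth portion of $\partial\Omega$, but is delicate near the vertex of $W_\theta$ and near corners of $\partial\Omega$, where the remark following Theorem \ref{Scherk-gen} (allowing finitely many discontinuities in the data on $C_K$) will be needed; additionally, $\gamma_K$ must be positioned compatibly with the orientation of $W_{\pi/2}$ so that $D_K$ remains inside $\Omega$.
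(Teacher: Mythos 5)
Your proposal is correct and follows essentially the same route as the paper: the Figueroa--Mercuri--Pedrosa graph $\tau x_1x_2$ over an inscribed quarter-wedge is the lower barrier forcing quadratic growth, the exhaustion data are chosen to dominate it on the far boundary, and the Scherk-type surfaces of Theorem \ref{Scherk-gen} supply the uniform upper bounds needed for compactness. The only real difference is that the paper sidesteps the difficulty you flag at the end by taking the Scherk domain to be the exhaustion slice $\Omega\cap\{x_2\le n\}$ itself (whose boundary is automatically one segment plus one convex arc of $\partial\Omega$) and by using the zero-data Scherk graph shifted up by $\max_{C_n}\varphi$ instead of prescribing $\varphi$ on the convex arc.
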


\begin{proof} Up to an isometry, we can assume that $x_2 \geq 0$ on 
the wedge $W_{\theta}$ contained in $\Omega$ and that $W_{\theta}$ 
is symmetric with
respect to the $x_2$ axis.   Consider the graph $\Sigma$ of the function 
$v(x_1,x_2)=\tau x_1 x_2$ over the wedge 
$\{(x_1,x_2)\in\R^2 , \ x_1\geq 0, \ x_2\geq 0\}.$
We rotate $\Sigma$ in order to obtain a graph $\tilde\Sigma$ over the 
wedge $\{(x_1,x_2)\in\R^2 , \  x_2\geq| x_1|\}$ with zero boundary value. 
Moreover $x_3(p)>0$ for any
$p\in \tilde\Sigma.$

For any $n\in \n^*$ we denote by $A_n$ and $B_n$ the 
intersection points of the
boundary of $\Omega$ with the horizontal line $\{x_2 = n\}$.

Let $\Omega_n=\Omega \cap\{(x_1,x_2),   \  x_2\leq n\}. $
The set $\Omega$ is convex and  bounded.  Let  
$C_n=\partial \Omega\cap\partial \Omega_n$  and 
$\gamma_n= \partial \Omega_n\cap \{x_2=n\}.$ Observe that the 
sequence $ (\Omega_n)$ gives an exhaustion 
of the
domain $\Omega.$

Let
$\varphi_n :\partial \Omega_n \rightarrow \R$ be
a continuous function such that:
\begin{itemize}
\item $\varphi_n$ is positive and stays above $\tilde\Sigma$ along the open segment
$\gamma_n.$

\item  $\varphi_{n_{|C}}=\varphi.$
\end{itemize}
Let $u_n$ be the minimal extension of $\varphi_n$ to $\Omega_n$ given
by Theorem \ref{existence}.
Let $h_n$ be the function
whose  graph is the Scherk-type surface with boundary value zero on
$C_n$ and with value $+\infty$ on
$\gamma_n,$ given  by Theorem \ref{Scherk-gen}.

\smallskip

Finally let $K\subset {\rm int}(\Omega)$  be a compact subset and let
$n(K)\in \n^*$ such that   $K\subset \Omega_{n(K) -1}$.
 Let $L_{n(K)-1}={\rm max}_{C_{n(K)-1}}\varphi.$
By the maximum principle, for
any $n \geq n(K)$ we have $u_n \leq h_{n(K)}+L_{n(K)-1}$ on $K$.  This gives uniform
estimates  from above for the sequence $(u_n)$ on $K$. Moreover the sequence $(u_n)$ is
bounded below by $0$.

\smallskip

We deduce from Remark \ref{height} that a
subsequence
of $(u_n)$ converges to a non-negative function $u$ on int$(\Omega)$, 
satisfying  equation
(\ref{minimal}). Furthermore, the function $u$ extends continuously to 
$\partial\Omega$ with prescribed   value $\varphi.$ This can be seeing by 
comparing with the barriers described in
\ref{barrier}.

\smallskip

Observe that, for any $n,$ by the maximum principle, the graph of $u_n$
stays above $\tilde\Sigma$ , so the
graph of $u$ also stays above $\tilde\Sigma$. Since the function whose  graph is
$\tilde\Sigma$ has quadratic growth, we have that $u$ has at least quadratic
growth.
\end{proof}

\begin{remark}
Using the reflection principle and
the existence of non-zero minimal graphs over any wedge with angle
between $\pi/2$ and $\pi$ (see \cite{C} and also Proposition \ref{P.wedge}),
we get also the existence of non-zero minimal graphs over any
wedge with angle strictly between $\pi$ and
$2\pi$.
\end{remark}

 As a consequence of Theorem \ref{P.wedge-gen}, we have the following 
Corollaries.

\begin{coro}\label{P.wedge}
Let  $W_{\theta}$  be a wedge with vertex at the origin and angle
 $\theta\in \, [\frac{\pi}{2}, \pi[\,$,  and let $\varphi$ be  a prescribed, 
non-negative, 
continuous data on $\partial W_{\theta}.$ Then, there exists a minimal
 extension of $\varphi$ to $W_{\theta}$  in $Nil_3(\tau),$ $\tau\not=0,$  
 with at least quadratic growth.

\end{coro}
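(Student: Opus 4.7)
The plan is to observe that Corollary \ref{P.wedge} is the specialization of Theorem \ref{P.wedge-gen} to the case $\Omega = W_\theta$, so the only real work is to check the hypotheses of that theorem. First I would verify that a wedge $W_\theta$ with $\theta \in [\pi/2,\pi[$ is itself a convex unbounded domain in $\R^2$: convexity follows from $\theta <\pi$ (the two edges bound a half-plane intersection, which is convex), unboundedness is immediate, and $W_\theta$ is not a half-plane precisely because $\theta <\pi$ strictly. Finally, $W_\theta$ trivially contains the wedge of angle $\theta$ with vertex at the origin, namely itself.

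Having checked the geometric hypotheses, I would apply Theorem \ref{P.wedge-gen} with $\Omega = W_\theta$ and the prescribed non-negative continuous boundary data $\varphi$ on $\partial W_\theta$. The theorem then produces a minimal extension $u$ of $\varphi$ on $W_\theta$ in $Nil_3(\tau)$ with at least quadratic growth, which is exactly the conclusion of the corollary.

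No additional construction is required: the key ingredients are already built into the proof of Theorem \ref{P.wedge-gen}. Namely, after placing $W_\theta$ so that it is symmetric with respect to the $x_2$-axis and contained in $\{x_2 \geq 0\}$, one exhausts it by the truncated wedges $\Omega_n = W_\theta \cap \{x_2 \leq n\}$, solves the Dirichlet problem on each $\Omega_n$ via Theorem \ref{existence}, uses the Scherk-type surface from Theorem \ref{Scherk-gen} with value $+\infty$ on the top edge $\gamma_n$ as an upper barrier, and uses the rotated Figueroa--Mercuri--Pedrosa graph $\tilde \Sigma$ (coming from $v(x_1,x_2)=\tau x_1x_2$) as a lower barrier. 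The compactness result of Remark \ref{height} then extracts a convergent subsequence, and quadratic growth follows from the fact that $u$ remains above $\tilde \Sigma$ while $\tilde\Sigma$ itself has quadratic growth.

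There is essentially no obstacle here beyond the verification of the hypotheses; the only small subtlety is to make sure the ambient wedge in the statement of Theorem \ref{P.wedge-gen} can be taken to coincide with $\Omega$ itself (rather than being strictly contained in it), which is visibly allowed by the formulation of that theorem. Consequently the corollary follows at once.
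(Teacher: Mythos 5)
Your proposal is correct and coincides with the paper's own argument: the authors likewise obtain Corollary \ref{P.wedge} as an immediate consequence of Theorem \ref{P.wedge-gen} by taking $\Omega=W_{\theta}$. The hypothesis checks you carry out (convexity, unboundedness, $W_\theta$ not a half-plane, and $W_\theta$ containing itself as the required wedge) are exactly what is implicitly needed, and nothing further is required.
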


 The proof  of Corollary  \ref{P.wedge} is an immediate consequence of Theorem 
\ref{P.wedge-gen}, 
taking $\Omega=W_{\theta}.$
 Notice that, when the boundary value is zero,   the fact that  the  growth is 
at least 
quadratic yields that the examples of  Corollary  \ref{P.wedge} are different 
from those in \cite{C}.

\begin{coro}\label{coro-halfplane}
Let $\Pi$ be the  half-plane $\{(x_1,x_2)\in\r^2, \     x_2\geq 0\},$ and 
let $\varphi:\partial \Pi\longrightarrow \r$ an odd function.
\begin{enumerate}
\item[]{{\rm (A)}}   Assume  that  $\varphi$ is continuous and  
$\varphi(x_1)\geq 0$ 
for $x_1\geq 0.$ Then, there exists a minimal
 extension of $\varphi$ to $\Pi$  in $Nil_3(\tau),$ $\tau\not=0,$   
 with at least quadratic growth.
 \item[]{{\rm (B)}}  Assume that $\varphi$ is continuous except as a 
 discrete set of points of $\partial\Pi,$ $\varphi(x_1)\geq 0$ for $x_1\geq 0$ 
and  there exists a constant $M$ such 
that $|\varphi|\leq M.$ Then, there exists a minimal
 extension of $\varphi$ to $\Pi$  in $Nil_3(\tau)$ 
$(\tau \geq 0$, including the Euclidean 3-space$)$, that is 
bounded in $\Pi$ by the same constant $M.$ The boundary of the minimal extension 
is the union of the graph of $\varphi$ with 
the vertical segment between
the left and the right limit of $\varphi$ at the discontinuity points.

\end{enumerate}
\end{coro}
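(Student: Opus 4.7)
My approach decomposes the half-plane $\Pi$ into the two quarter-planes
$W_+ = \{(x_1,x_2) \in \R^2 : x_1 \geq 0,\, x_2 \geq 0\}$ and
$W_- = \{(x_1,x_2) \in \R^2 : x_1 \leq 0,\, x_2 \geq 0\}$,
and glues solutions on each wedge by means of the reflection isometry
$$\sigma(x_1,x_2,x_3) = (-x_1,\, x_2,\, -x_3),$$
which is the composition of $\varphi_4$ (with $\theta=\pi$) and $\varphi_5$, hence a bona fide isometry of $Nil_3(\tau)$ for every $\tau \in \R$. The boundary datum to be solved on $\partial W_+$ is $\varphi$ on the positive $x_1$-axis together with $0$ on the positive $x_2$-axis; by oddness of $\varphi$ (and continuity at the origin in (A)) these agree at the origin.

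For (A) this datum is continuous and non-negative, and $W_+$ is a wedge of angle $\pi/2$, so Corollary \ref{P.wedge} supplies a minimal extension $u_+ : W_+ \to \R_+$ with at least quadratic growth. For (B) the datum is piecewise continuous, non-negative and bounded above by $M$. Since $W_+$ is a convex unbounded domain different from the half-plane, Theorem \ref{existence-unbounded}(A) yields a minimal extension $u_+$ on $W_+$, whose graph has boundary equal to the graph of the datum union the vertical segments over its discontinuity points; Remark \ref{remark-existence}(C) gives $0 \leq u_+ \leq M$. In the Euclidean case $\tau = 0$ the same conclusion is obtained by exhausting $W_+$ by $W_+ \cap B_n$, solving via Theorem \ref{existence}, and using Euclidean catenoids as barriers to control the sequence on compact sets.

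In both cases, define $u_-(x_1, x_2) := -u_+(-x_1, x_2)$ on $W_-$. Since $\sigma$ is an isometry of $Nil_3(\tau)$ and the graph of $u_-$ is the $\sigma$-image of the graph of $u_+$, $u_-$ satisfies equation \eqref{minimal} on $W_-$; oddness of $\varphi$ ensures that $u_-$ has boundary datum $\varphi$ on the negative $x_1$-axis and $0$ on the $x_2$-axis. Setting $u := u_\pm$ on $W_\pm$ yields a function on $\overline{\Pi}$ which is continuous away from the discontinuity set of $\varphi$, takes the prescribed boundary values, is bounded by $M$ in case (B), and inherits quadratic growth from $u_+$ in case (A). The boundary of its graph is obtained by applying $\sigma$ to the boundary description of the graph of $u_+$, producing exactly the graph of $\varphi$ union the vertical segments at discontinuities.

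The delicate step is smoothness of $u$ across the $x_2$-axis. That axis is the fixed-point set of the isometric involution $\sigma$, hence a totally geodesic submanifold, and in particular a geodesic, of $Nil_3(\tau)$. The minimal surface $\Sigma := \Sigma_+ \cup \Sigma_-$ meets this geodesic along its common boundary and is globally $\sigma$-invariant; the Schwarz reflection principle for minimal surfaces across a geodesic fixed by an isometric reflection then guarantees that $\Sigma$ is a smooth minimal surface, so $u$ is smooth on the interior of $\Pi$. I expect this last point — verifying that the reflection principle applies in $Nil_3(\tau)$ across the $x_2$-axis — to be the main technical obstacle, although it is standard once one identifies the axis as the fixed set of the isometric involution $\sigma$.
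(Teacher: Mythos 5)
Your proposal is correct and follows essentially the same route as the paper: solve on the quarter-wedge $\{x_1\ge 0,\ x_2\ge 0\}$ with datum $\varphi$ on $\{x_2=0\}$ and $0$ on $\{x_1=0\}$, via Corollary \ref{P.wedge} in case (A) and Theorem \ref{existence-unbounded}(A) together with Remark \ref{remark-existence} in case (B), then extend to $\Pi$ by the reflection principle across the $x_2$-axis using the oddness of $\varphi$. The paper simply cites the Reflection Principle as in \cite[Lemma 3.6]{ST4}; your explicit identification of the isometry $\sigma(x_1,x_2,x_3)=(-x_1,x_2,-x_3)$ and of the $x_2$-axis as its fixed geodesic is precisely the content of that step.
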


\begin{proof}   Let $Q$ be the subset of $\Pi$ given by  
$\{ (x_1,x_2)\in\r^2, \    x_1, x_2\geq 0\}.$ We  define  the function
$\psi:\partial Q\longrightarrow\r$ by $\psi=\varphi$ on 
$\partial Q\cap \{x_2=0\},$ $\psi\equiv 0$ on $\partial Q\cap   \{x_1=0\}.$

In the case (A), let $v$ be the minimal extension of $\psi$ over $Q$ 
given by Corollary \ref{P.wedge}.
 Then, extend $v$ to $\Pi$  by Reflection Principle (as in 
 \cite[Lemma 3.6]{ST4}) along 
 $\partial Q\cap   \{x_1=0\}.$ Due to the fact that $\varphi$ is odd,
 the extended function gives the desired solution with quadratic growth.

In the case (B), let  $v$ be the  minimal extension of $\psi$ over $Q$
given by  Theorem \ref{existence-unbounded}(A). Notice that, by Remark
\ref{remark-existence}, the function $v$ is bounded by the constant $M.$  
Then, extended $v$ to $\Pi$  by Reflection Principle along
$\partial Q\cap   \{x_1=0\}.$ The extend function gives the desired solution.
\end{proof}

\

\textsc{Barbara Nelli}\newline
{\em Dipartimento Ingegneria e Scienze dell'Informazione e Matematica \newline
Universit\'a di L'Aquila \newline
via Vetoio - Loc. Coppito  - 67010 (L'Aquila) \newline
 Italy}

nelli@univaq.it

\

 \textsc{Ricardo Sa Earp}
 \newline
 {\em Departamento de Matem\'atica \newline
  Pontif\'\i cia Universidade Cat\'olica do Rio de Janeiro\newline
Rio de Janeiro - 22451-900 RJ \newline
 Brazil }

rsaearp@gmail.com

\

\textsc{Eric Toubiana}\newline
{\em Institut de Math\'ematiques de Jussieu-PRG \newline 
Universit\'e Paris Diderot  \newline
UFR de Math\'ematiques  - B\^atiment Sophie Germain \newline
5, rue Thomas Mann - 75205 Paris Cedex 13 \newline
France}

eric.toubiana@imj-prg.fr

\end{document}